\theoremstyle{plain}
\newtheorem{theorem}{Theorem}[section]
\newtheorem{proposition}[theorem]{Proposition}
\newtheorem{fundamental fact}[theorem]{Fundamental Fact}
\newtheorem{def-thm}[theorem]{Definition-Theorem}
\newtheorem{lemma}[theorem]{Lemma}
\theoremstyle{definition}
\newtheorem{definition}[theorem]{Definition}
\newtheorem{remark}[theorem]{Remark}
\newtheorem{example}[theorem]{Example}
\newtheorem{conjecture}[theorem]{Conjecture}
\newtheorem*{acknowledgement}{Acknowledgement}
\newcommand{\PP}{\mathbb{P}}
\newcommand{\ZZ}{\mathbb{Z}}
\newcommand{\CC}{\mathbb{C}}
\newcommand{\OO}{{\mathcal O}}
\DeclareMathOperator{\dbar}{{\overline \partial}}
\begin{document}

\title[Positive semi-definite curvature with many zeroes]{On positive semi-definite holomorphic sectional curvature with many zeroes}

\author{Minzi Chen}
\address{Minzi Chen. Department of Mathematics, University of Houston, 4800 Calhoun Road, Houston, TX 77204, USA} \email{{mchen38@math.uh.edu}}

\author{Gordon Heier}
\address{Gordon Heier. Department of Mathematics, University of Houston, 4800 Calhoun Road, Houston, TX 77204, USA} \email{{heier@math.uh.edu}}

\subjclass[2020]{32L05, 32Q10, 32Q15, 53C55}
\keywords{Complex manifolds, K\"ahler metrics, semi-positive holomorphic sectional curvature, total space, product metric, Calabi's Ansatz}

\thanks{The second author was supported by a grant from the Simons Foundation (Grant Number 963755-GH)}

\begin{abstract}
We establish the existence of complete K\"ahler metrics of semi-positive holomorphic sectional curvature with many zeroes in an interesting and natural geometric setting. Specifically, we use Calabi's Ansatz in the form due to Koiso-Sakane to produce such metrics on the total space of powers of the tautological line bundles over the projective line. We formulate a general conjecture regarding the compact case and use a product approach to obtain an analogous result in the non-compact case.
\end{abstract}

\maketitle

\section{Introduction}

In differential geometry, a central task is to study the interaction of the curvature of a metric (e.g., a Riemannian, Hermitian, or K\"ahler metric) and the geometry of the underlying space. In one direction, the goal is to study the consequences of the existence of a metric with certain \mbox{(semi-)}definite curvature for the geometry of a, say smooth or complex manifold. Conversely, in the other direction, the goal is to establish the existence of metrics with prescribed curvature in a certain geometric setting.\par

In this paper, we address the above converse direction and work in the setting of complex manifolds and Hermitian or K\"ahler metrics. Based on the definition of the fundamental curvature 4-tensor, there are various notions of curvature that provide numerical curvature information of different “strengths.” Some classical notions are sectional curvature, bisectional curvature, holomorphic sectional curvature, Ricci curvature, scalar curvature and total scalar curvature. Here, we focus mostly on the concept of holomorphic sectional curvature, but also address Ricci curvature.\par

In the case of a K\"ahler metric, the holomorphic sectional curvature is obtained by restricting the sectional curvature of the associated Riemannian metric to complex lines, i.e., real planes invariant under the complex structure, in the tangent bundle. This notion has been studied for decades (see \cite{Tsukamoto_1957}, \cite{Grauert_Reckziegel}, \cite{Berger}, \cite{Cowen}, \cite{Wu}, \cite{Cheung} for some early milestones), but has historically been considered ``somewhat mysterious" (\cite[Remark 7.20]{Zhengbook}), especially as it relates to the Ricci curvature.\par

Around 1980, S.-T. Yau conjectured consequences of the existence of a K\"ahler metric of definite holomorphic sectional curvature for a projective or compact manifold. In the negative case, the canonical line bundle should be ample, and in the positive case, the manifold should be rationally connected or even unirational. Not much progress was made on these conjectures until the second author and his collaborators initiated a systematic program of study of these questions with modern methods around 2010. We refer the reader to the literature (e.g., \cite{heier_lu_wong_mrl}, \cite{heier_wong_cag},  \cite{HLW_JDG}, \cite{Wu_Yau_Invent},\cite{Wu_Yau_CAG}, \cite{Tosatti_Yang}, \cite{HLWZ}, \cite{Diverio_Trapani}, \cite{heier_wong_doc_math}) for more on this, as the present paper in fact concerns the converse direction, namely, we are interested in the existence problem of K\"ahler metrics of semi-positive holomorphic sectional curvature in a given geometric setting.\par
The starting point for this line of work is Hitchin's construction in \cite{Hitchin} of metrics of positive holomorphic sectional curvature on Hirzebruch surfaces $\PP(\OO_{\PP^1}\oplus\OO_{\PP^1}(k))$ using a warped metric approach to lift up the metric from the base and appropriately combine it with the fiber metrics. The second author, with his students Alvarez and Chaturvedi, and Zheng subsequently obtained more general existence results for K\"ahler metrics of positive holomorphic sectional curvature (partly with pinching) in certain geometric situations in \cite{ACH}, \cite{AHZ}, \cite{CH} based on Hitchin's approach.\par
In the paper \cite{YZ}, Yang-Zheng obtained positive holomorphic sectional curvature K\"ahler metrics on Hirzebruch manifolds, which are compactifications of split rank two bundles over projective space of arbitrary dimension, thus generalizing the notion of a Hirzebruch surface. Their main technical tool is Calabi's Ansatz \cite{Calabi_79} in the version due to Koiso-Sakane \cite{KSI}, \cite{KSII}, thus going beyond the type of metric constructed by warping. This powerful tool allows them to obtain such metrics in each K\"ahler class of a Hirzebruch manifold.\par
The aim of the present paper is to combine in the semi-positive case the approach from \cite{YZ} with a study of the phenomenon of ``unexpected zeroes"  that was discovered in the semi-negative case in \cite[Example 1.2]{HLWZ} in response to a question in \cite[Remark 1.6] {HLW_JDG}, which asked if a projective manifold with ample canonical line bundle can carry K\"ahler metric with semi-negative holomorphic sectional curvature ``with many zeroes." Here, we measure the ``amount of zeroes'' that a semi-definite Hermitian metric has in terms of the rank invariant introduced in the semi-negative case in  \cite{HLW_JDG}. 
\begin{definition}
Let $M$ be a Hermitian manifold with semi-positive (semi-negative) holomorphic sectional curvature $H$. For $p\in M$, let $\eta(p)$ be the maximum of those integers $k\in \{0,\ldots,n:=\dim M\}$ such that there exists a $k$-dimensional subspace $L\subset T_p M$ with $H(v)=0$ for all $v\in L\backslash \{0\}$. Set $\eta_M:=\min_{p\in M} \eta(p)$ and $r^+_M:=n-\eta_M$ ($r^-_M:=n-\eta_M$).
\end{definition}
For the sake of brevity, we only summarize the example from the semi-negative case here briefly as follows, referring the reader to the paper for full details. 
\begin{example}[{\cite[Example 1.2]{HLWZ}}]
Let $A$ be a principally polarized abelian variety of dimension $n+1$, and let $M\subset A$ be a theta divisor. It was proven by Andreotti-Mayer that for generic $A$, the hypersurface $M$ is non-singular, i.e., a submanifold. Take the standard flat K\"ahler metric on $A$ and restrict it to $M$. By the curvature decreasing property of subbundles, $M$ has semi-negative bisectional curvature and, in particular, $H\leq 0$. By the adjunction formula, the canonical line bundle of $M$ is ample. On the other hand, an explicit computation based on the notion of a graph metric shows that $r^-_M=\lfloor\frac{n+1}{2}\rfloor$, which is less than $n$ (when $n$ is at least $2$).
\end{example}

In this example, one of the key points is the use of the curvature decreasing property saying that if $M'$ is a submanifold of $M$, then the holomorphic sectional curvature of $M'$ does not exceed that of $M$ (see \cite{Wu}). When exploring the analogous question in semi-positive curvature, this approach is destined to fail, as the curvature decreasing property ``works in the wrong directions" in that case. Before we explain what we have been able to accomplish, we state the following conjecture, which remains open.
\begin{conjecture}\label{main_conj}
Let $n\geq 2$ be an integer. Then there exists an $n$-dimensional compact K\"ahler manifold $M$ with ample anti-canonical line bundle and semi-positive holomorphic sectional curvature such that $r^+_M= \lfloor\frac{n+1}{2}\rfloor$.
\end{conjecture}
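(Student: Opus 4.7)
The natural strategy, paralleling the non-compact results of the present paper, is to combine a two-dimensional compact Fano building block with products. For $n=2$, I would work on a Fano Hirzebruch surface $\Sigma$ (either $F_0=\PP^1\times\PP^1$ or $F_1$) and apply the Koiso-Sakane form of the Calabi Ansatz as in \cite{YZ}. In that paper the momentum profile $\varphi$ is chosen inside an open region producing strictly positive HSC. I would instead slide $\varphi$ to the boundary of this admissible region, hoping for a critical profile $\varphi^{\ast}$ whose induced K\"ahler metric has $H\ge 0$ everywhere and, at each point, a null cone that is precisely a single complex line in the tangent space. Such a $\Sigma$ would be Fano and would realize the conjecture for $n=2$: $\eta_\Sigma=1$, and hence $r^+_\Sigma = 1 = \lfloor 3/2 \rfloor$.

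For $n\geq 3$, set $m=\lfloor n/2\rfloor$, and let $M=\Sigma\times\cdots\times\Sigma$ ($m$ factors) when $n=2m$, while $M=\Sigma\times\cdots\times\Sigma\times\PP^1$ ($m$ copies of $\Sigma$) when $n=2m+1$, equipped with the product K\"ahler metric using the Fubini-Study metric on the $\PP^1$ factor. A product of Fanos is Fano, so $-K_M$ is ample. For a vector $v=v_1+\cdots+v_k$ decomposed along the factors, the product metric gives
\begin{equation*}
H(v)=\frac{\sum_i \|v_i\|^4 H_i(v_i)}{\bigl(\sum_i\|v_i\|^2\bigr)^2}\ge 0,
\end{equation*}
which vanishes if and only if each $v_i$ is either zero or lies in the null cone $N_i\subset T_{p_i}M_i$. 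By Step~1 each $\Sigma$-factor contributes $N_i$ equal to a complex line, while the $\PP^1$-factor contributes $N=\{0\}$. Hence the null set at $p$ is exactly the linear subspace $N_1\oplus\cdots\oplus N_k$ of complex dimension $m$, which is both an achievable null subspace and the maximal one. Consequently $\eta_M = m$ and $r^+_M=n-m=\lfloor (n+1)/2 \rfloor$, as required.

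The main obstacle is the surface step. It is not a priori clear that the Koiso-Sakane parametric family admits a critical profile whose HSC quartic has, at every point, exactly a one-dimensional kernel with no negative directions: plausible failure modes include the kernel becoming higher-rank at some $p\in\Sigma$ just as positivity is lost, or semi-positivity breaking before any kernel appears. Ruling these out requires a delicate ODE analysis of the profile equations from \cite{YZ} on the boundary of their positivity region, combined with the explicit form of $H$ on a $\PP^1$-bundle over $\PP^1$ to control where the kernel sits. If the Hirzebruch family turns out to be too rigid to produce such a critical metric, an enlargement of the ansatz (non-$S^1$-invariant deformations, or a switch to another del Pezzo such as $\Bl_{p,q}\PP^2$) would be the natural next attempt.
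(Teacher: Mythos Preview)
This statement is a \emph{Conjecture} in the paper, explicitly described there as remaining open; the paper contains no proof of it. Your proposal is accordingly not a proof but a strategy sketch, and you yourself flag the gap.

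Your product reduction is exactly the mechanism the paper uses to deduce the non-compact Theorem~\ref{thm_conj_form} from the two-dimensional Theorem~\ref{main_theorem}: the addition formula $r^+_{M\times N}=r^+_M+r^+_N$ is Lemma~\ref{prod_lemma}, and your even/odd split with an extra $\PP^1$ factor is verbatim the proof of Theorem~\ref{thm_conj_form}. So the higher-dimensional part of your plan is already carried out in the paper and contributes nothing new toward the conjecture.

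All of the content therefore sits in your surface step, and here the proposal does not go beyond what the paper already attempts. ``Sliding $\varphi$ to the boundary of the admissible region'' is exactly what the paper does: it imposes the discriminant equality $2B+\sqrt{AC}=0$ as an ODE and solves it explicitly, obtaining the profile~\eqref{phiequals}. That profile, however, satisfies the Koiso--Sakane boundary conditions only at $U_{\min}$; as $U\to\infty$ it grows linearly, which is what makes the metric \emph{complete} on the open total space rather than extendable across the infinity section of the Hirzebruch compactification. On a compact $F_k$ one needs $\phi$ also to vanish with $\phi'=-2$ at a finite $U_{\max}$, and the paper's explicit one-parameter family never does this. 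Whether some other solution of the discriminant ODE---or a metric outside the $U(2)$-invariant Ansatz, as you suggest---can meet all the compact-end boundary conditions while keeping $H\ge 0$ with a one-dimensional null line at every point is precisely the open problem. Your proposal correctly locates the obstacle but does not resolve it.
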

It is the required compactness that puts this Conjecture out of reach at the moment. Instead, our goal here is to establish the existence of an appropriate complete metric as a partial result towards this conjecture. A statement of our result that matches Conjecture \ref{main_conj} closely is the following.
\begin{theorem}\label{thm_conj_form}
Let $n\geq 2$ be an integer. Then there exists an $n$-dimensional complete K\"ahler manifold $M$ with semi-positive holomorphic sectional curvature such that $r^+_M= \lfloor\frac{n+1}{2}\rfloor$. Furthermore, the nature of $M$ is such that it can be compactified to a compact complex manifold with ample anti-canonical line bundle.
\end{theorem}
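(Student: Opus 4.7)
The plan is to build $M$ as a Kähler product whose $2$-dimensional factors are the building blocks furnished by the Calabi--Koiso--Sakane construction carried out earlier in this paper, with one extra $\PP^1$-factor added when $n$ is odd. Concretely, let $X$ denote the total space of the tautological line bundle $\OO_{\PP^1}(-1)$ equipped with the complete Kähler metric of semi-positive holomorphic sectional curvature satisfying $r^+_X = 1$ produced there, and set
\[
M := X^{\lfloor n/2 \rfloor} \times (\PP^1)^{\varepsilon_n},
\]
with $\varepsilon_n = 0$ for $n$ even and $\varepsilon_n = 1$ for $n$ odd, the $\PP^1$-factor carrying its Fubini--Study metric. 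The product Kähler metric on $M$ is then complete, and semi-positivity of $H$ passes to products.

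The one nontrivial step is to show that $r^+$ is additive across Kähler products of manifolds of semi-positive HSC. For $M_1 \times M_2$ with product metric and a tangent vector $v = v_1 + v_2$ at $(p_1,p_2)$, the product formula
\[
H(v) = \frac{H_1(v_1)|v_1|^4 + H_2(v_2)|v_2|^4}{(|v_1|^2 + |v_2|^2)^2}
\]
together with the semi-positivity of $H_1, H_2$ forces $H_i(v_i) = 0$ whenever $v_i \neq 0$, as soon as $H(v) = 0$. For any $H$-null subspace $L \subset T_{(p_1,p_2)}M$, the projections $L_i := \pi_i(L)$ are therefore $H_i$-null, and the injective map $L \hookrightarrow L_1 \oplus L_2$ yields $\dim L \leq \eta(p_1) + \eta(p_2)$; the reverse inequality comes from taking $L = L_1 \oplus L_2$ for maximal null $L_i$. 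Consequently $\eta_{M_1 \times M_2}(p_1,p_2) = \eta_{M_1}(p_1) + \eta_{M_2}(p_2)$, and minimizing over the two factors independently gives $r^+_{M_1 \times M_2} = r^+_{M_1} + r^+_{M_2}$. Since $r^+_X = 1$ and $r^+_{\PP^1} = 1$, iteration yields $r^+_M = \lfloor n/2 \rfloor + \varepsilon_n = \lfloor (n+1)/2 \rfloor$, as required.

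For the compactification, $X$ is the complement of the infinity section inside the Hirzebruch surface $F_1 = \PP(\OO_{\PP^1} \oplus \OO_{\PP^1}(-1))$, which is the blowup of $\PP^2$ at a point and hence a smooth Fano surface. Thus $\bar M := F_1^{\lfloor n/2\rfloor} \times (\PP^1)^{\varepsilon_n}$ is a smooth projective compactification of $M$, and since the anti-canonical bundle of a product is the external tensor product of the factor anti-canonical bundles, ampleness propagates to $\bar M$.

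The substantive work, and thus the true obstacle, has already been absorbed into the earlier Calabi--Koiso--Sakane part of the paper, namely the explicit construction of the metric on $X$ and the verification that $r^+_X = 1$. Relative to that input, what remains is essentially the product formula for $H$ displayed above, the resulting splitting of null subspaces across products, and the elementary observation that a product of Fano manifolds is Fano; I anticipate no further difficulty beyond tracking these identities carefully.
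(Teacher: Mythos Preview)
Your proposal is correct and follows essentially the same approach as the paper: take products of copies of the $k=1$ building block from Theorem~\ref{main_theorem} (plus one $\PP^1$ factor in the odd case), invoke additivity of $r^+$ under products of semi-positive HSC manifolds, and compactify factorwise to a product of Fano manifolds. Your justification of the additivity formula via the product expression for $H$ and the projection bound $\dim L \leq \dim \pi_1(L) + \dim \pi_2(L)$ is in fact more explicit than the paper's own proof of its Lemma~\ref{prod_lemma}, which cites the same product formula but leaves the subspace-splitting step implicit.
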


We prove Theorem \ref{thm_conj_form} via a product argument based on the following Theorem \ref{main_theorem} for the total space of powers of the tautological line on the projective line. Therefore, it is perhaps the following theorem that should be considered the main theorem of this paper.
\begin{theorem}\label{main_theorem}
Let $k$ be a positive integer. The (non-compact) total space $M$ of the line bundle  $\OO(-k)\to \PP^1$ carries a complete K\"ahler metric of semi-positive holomorphic sectional curvature with $r^+_M=1$.
\end{theorem}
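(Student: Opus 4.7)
The plan is to apply Calabi's Ansatz in the Koiso-Sakane formulation, in the spirit of the Yang-Zheng construction for Hirzebruch manifolds, but with a profile function adapted to give a complete rather than compact metric on the non-compact total space. Equip $\OO(-k)$ with its dual Fubini-Study Hermitian metric $h$ (so that the Chern curvature of $h$ is $-k\omega_{FS}$), and let $s: M \to [0, \infty)$ denote the resulting norm-squared function. I would consider the family of $U(1)$-invariant K\"ahler metrics of the form
$$\omega_F = c\,\pi^*\omega_{FS} + \sqrt{-1}\,\partial\bar\partial F(s),$$
parameterized by a constant $c \geq 0$ and a smooth profile $F: [0, \infty) \to \RR$ with $F'(0) > 0$. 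Smoothness of $\omega_F$ across the zero section is automatic from the $C^\infty$ behavior of $F$ at $0$; K\"ahler positivity imposes inequalities of the shape $c + ksF'(s) > 0$ and $F'(s) + sF''(s) > 0$; and completeness as $s \to \infty$ is equivalent to the divergence of the fiber radial distance, a growth condition on $F$.

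Using the $U(1)$-symmetry of the fibers (together with the symmetry rotating $\PP^1$) to reduce the HSC computation to evaluation at $z = 0$ in a standard affine chart, where the metric diagonalizes, a direct calculation yields that the HSC of a unit tangent vector $v$ making tilt angle $\theta$ with the horizontal distribution takes the form
$$H(v) = A(s)\cos^4\theta + 2C(s)\cos^2\theta\sin^2\theta + B(s)\sin^4\theta,$$
where $A, B, C$ (the base HSC, the fiber HSC, and the mixed bisectional curvature, respectively) are explicit rational expressions in $c$, $k$, $s$, and the derivatives of $F$ at $s$. By the rotational symmetry, the "phase" cross terms vanish, so this is the complete expression.

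The heart of the argument is to choose $c > 0$ and $F$ such that (i) $A(s), B(s) \geq 0$ for every $s$, (ii) the quadratic form $A x^2 + 2C xy + B y^2$ has a nontrivial zero on $\{x, y \geq 0\}$ at every $s$ while remaining nonnegative there (which requires either $B = 0$ with $C \geq 0$, or $B > 0$ with $C \leq 0$ and $AB = C^2$), and (iii) the form is not identically zero at any $s$. The main technical obstacle I foresee is that the most natural attempt, namely enforcing $B \equiv 0$ by the third-order ODE $(f''')^2 = f''f^{(4)}$ (where $f(u) := F(e^u)$) so that the fiber direction is always flat, produces solutions $f''(u) = g_0 e^{\alpha u}$ whose smoothness across the zero section constrains $\alpha$ in a way that tends to force the bisectional $C$ to have the wrong sign. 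The construction must therefore exhibit an explicit profile (or impose the companion relation $AB \equiv C^2$) that simultaneously respects smoothness at $s=0$, positivity of the K\"ahler form, and the asymptotic growth needed for completeness.

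Once a suitable $F$ is produced, the identity $r^+_M = 1$ follows from the conjunction of a zero HSC direction at every point (giving $\eta_M \geq 1$) with a strictly positive HSC direction at every point (giving $\eta_M \leq 1$, and hence $r^+_M = 2 - \eta_M = 1$). Semi-positivity of $H$ is immediate from the sign conditions on $A, B, C$, smoothness is built into the choice of $F$, and completeness follows from the large-$s$ asymptotics of $F'$.
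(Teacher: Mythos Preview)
Your framework coincides with the paper's: a $U(2)$-invariant K\"ahler metric on the total space via Calabi's Ansatz in the Koiso--Sakane form, reduction of the holomorphic sectional curvature to a binary quadratic in $(\cos^2\theta,\sin^2\theta)$, and the discriminant-vanishing condition $AB=C^2$ (with the mixed term nonpositive) so that the form is positive semidefinite with a nontrivial zero at every point. That is exactly the route taken.

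The gap is that you have written an outline and explicitly left the decisive step undone. The sentences ``the construction must therefore exhibit an explicit profile'' and ``once a suitable $F$ is produced'' stand in for the entire content of the proof. In the paper this step is not a formality: after reparametrizing by $U$ with $u^2=\phi(U)$, the equality $2B+\sqrt{AC}=0$ (paper's labeling; equivalent to your $AB=C^2$) is the nonlinear second-order ODE
\[
\frac{k\phi}{n+kU}-\phi' \;=\; -\sqrt{\Bigl(-\tfrac12\phi''\Bigr)\Bigl(\tfrac{2}{k^2}(n+kU)-\phi\Bigr)},
\]
which is solved by the two-stage Ansatz $\phi(U)=(n+kU)\psi(U)$ and $\psi(U)=\dfrac{a+bU}{c+dU}$, yielding the explicit one-parameter family
\[
\phi_{t_1}(U)=\frac{2(n+kU)(n+kU+t_1)}{k\bigl((k+1)(n+kU)+kt_1\bigr)},\qquad t_1<0.
\]
The boundary data $\phi(U_{\min})=0$, $\phi'(U_{\min})=2$, the strict concavity $\phi''<0$, the inequality $\phi<\tfrac{2}{k^2}(n+kU)$, and the divergence $\int^{\infty}\phi^{-1}\,dU=\infty$ (which gives completeness) are then all checked directly from this closed form. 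Your detour through the fiber-flat case $B\equiv 0$ is unnecessary and, as you suspected, incompatible with the other constraints; the paper imposes the full discriminant relation from the outset. Finally, your argument for $r^+_M=1$ needs one more ingredient you did not mention: to exclude $r^+_M=0$ one uses that in the K\"ahler case $H\equiv 0$ on a tangent space forces the full curvature tensor to vanish there, which is visibly false here since $C>0$.
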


Following \cite{YZ}, our proof of Theorem \ref{main_theorem} is also based on the use of Calabi's Ansatz in the form due to Koiso-Sakane in this semi-positive case. The main technical difficulty is to produce a metric with ``many zeroes" which nevertheless is semi-definite. Intuitively speaking, the issue here is that a metric of semi-positive definite holomorphic sectional curvature could be made indefinite by the smallest of perturbations, while a metric of positive definite holomorphic sectional curvature would remain positive definite under small enough perturbations. Technically, our existence proof comes down to solving a certain ordinary differential equation arising from an appropriate discriminant vanishing condition. We will solve this ordinary differential equation in an elementary fashion by making an Ansatz for the solution. \par

This paper is structured as follows. In Section 2, we give basic definitions and describe Calabi's Ansatz in the form due to Koiso-Sakane. The proofs of Theorems \ref{thm_conj_form} and \ref{main_theorem} are conducted in Section 3. In Section 4, we address the case of $k=0$. Finally, in Section 5, we investigate the Ricci curvature of our metric.

\section{Definitions and preliminary material}

Let $M$ be an $n$-dimensional complex manifold and $g$ be a Hermitian metric on $M$. In local holomorphic coordinates $z_1,\ldots,z_n$, we can write
\begin{equation*}
g=\sum_{i,j=1}^n g_{i\bar j} dz_i\otimes d\bar{z}_j.
\end{equation*}
The {\it associated $(1,1)$-form} of $g$ is
$$
\omega=\frac{\sqrt{-1}}{2} \sum_{i,j=1}^n g_{i\bar j} dz_i \wedge d\bar{z}_j.
$$
It is independent of the choice of local coordinates, and is real, i.e., $\bar{\omega}=\omega$.

Let $\nabla$ be the Hermitian connection of $g$, that is, the unique connection on the tangent bundle $T_M$ which is compatible with both $g$ and the complex structure. The torsion tensor of this linear connection is given by: $T(X,Y)=\nabla_X Y-\nabla_Y X-[X,Y]$. A Hermitian metric $g$ on $M$ is called a {\it K\"ahler metric} if $T=0$. This is equivalent to $d\omega =0$.

The components $R_{i\bar j k \bar l}$ of the curvature tensor $R$ associated with the metric connection pertaining to $g$ are locally given by the formula
\begin{equation*}
R_{i\bar j k \bar l}=-\frac{\partial^2 g_{i\bar j}}{\partial z_k\partial \bar z _l}+\sum_{p,q=1}^n g^{p\bar q}\frac{\partial g_{i\bar p}}{\partial z_k}\frac{\partial g_{q\bar j}}{\partial \bar z_l},
\end{equation*}
where $g^{p\bar q}$ refers to the inverse matrix of $(g_{i\bar j})$ (see \cite[p.\ 1104, \S 1]{Wu}).

If $\xi=\sum_{i=1}^n\xi_i \frac{\partial }{\partial z_i}$ is a non-zero complex tangent vector at $p\in M$, then the {\it holomorphic sectional curvature} $H(\xi)$ is given by
\begin{equation*}
H(\xi)=\left(\sum_{i,j,k,l=1}^n R_{i\bar j k \bar l}(p)\xi_i\bar\xi_j\xi_k\bar \xi_l\right) / \left(\sum_{i,j,k,l=1}^ng_{i\bar j}g_{k\bar l} \xi_i\bar\xi_j\xi_k\bar \xi_l\right).
\end{equation*}\par
If $g$ is K\"ahler, there is only one notion of Ricci curvature (for the definitions of the first, second and third Ricci curvatures in the Hermitian case, see \cite[p.~181]{Zhengbook}), which amounts to defining the \textit{Ricci
curvature form} to be
\begin{equation*}
Ric=-\sqrt{-1}\partial\bar\partial \log\det(g_{i\bar{j}}).
\end{equation*}
Moreover, if the tangent vector $\xi=\sum_{i=1}^n\xi_i e_i$ and curvature tensor are written in term of a unitary frame $e_1,\ldots,e_n$, then 
\begin{equation*}
Ric(\xi):=Ric(\xi,\bar \xi)=\sum_{i,j,k=1}^n R_{i\bar j k \bar k}\xi_i\bar \xi_j.
\end{equation*}\par
The {\it scalar curvature} $s$ is defined to be the trace of the Ricci curvature with respect to the metric $g$, i.e., the function
\begin{equation*}
\sum_{i=1}^n Ric(e_i)=\sum_{i,j=1}^n R_{i\bar i j \bar j}.
\end{equation*}\par

Following \cite{YZ}, we now introduce the method of Calabi's Ansatz \cite{Calabi_79} in the version due to Koiso-Sakane \cite{KSI}, \cite{KSII}.\par
Let $\pi:(L,h) \to (M,g)$ be a Hermitian line bundle over a compact K\"ahler manifold $(M,g)$. We consider the $\mathbb{C}^*$-action on $L^*=L \backslash L_0$, where $L_0$ is the zero section of $L$. Denote by $V$ and $S$ the two holomorphic vector fields generated by the $\mathbb{R}^+$ and $\mathbb{S}^1$ action respectively. Denote by $\widetilde{J}$ the complex structure on $L$. Assume $t$ is a smooth function on $L^*$ which only depends on the norm of Hermitian metric $h$ on $L$. By this we mean, fix a point $v \in L^*$, under a local trivialization of $L$, we may write $v=\xi e_L$ with $\xi \neq 0$ and $h(v)=|\xi|^2 h(e_L)$, then $t$ is a single-variable function of $\sqrt{h(v)}$. Moreover we assume $t$ is strictly increasing with respect to $\sqrt{h(v)}$.

Consider a Hermitian metric on $L^*$ of the form
\begin{align}
\widetilde{g}=\pi^* g_t +dt^2+(dt \circ \widetilde{J})^2, \label{gtilde}
\end{align}
where $g_t$ is a family of Riemannian metrics on $M$ to be decided. Let $u(t)^2=\widetilde{g} (V,V)$ and it can be checked that $u$ depends only on $t$.

Let $z_1,...,z_{n-1}$ be local holomorphic coordinates on $M$ and $z_0,...,z_{n-1}$ be local coordinates on $L^*$ such that $\frac{\partial}{\partial z_0}=V-\sqrt{-1}S$.

\begin{lemma}[\cite{KSI}]
The Hermitian metric $\widetilde{g}$ defined by \eqref{gtilde} is K\"ahler on $L^*$ if and only if each $g_t$ is K\"ahler on $M$, and $g_t=g_0-U \Theta(L)$, where $U'(t)=u(t)$ and $\Theta(L)$ is the curvature form of $(L,h)$. Moreover,
\[
\widetilde{g}_{0 \bar{0}}=2 u^2, \ \widetilde{g}_{\alpha \bar{0}}=2 u \partial_{\alpha} t, \ \widetilde{g}_{\alpha \bar{\beta}}=g_{t \alpha \bar{\beta}}+2 \partial_{\alpha} t \partial_{\beta} t.
\] 
\end{lemma}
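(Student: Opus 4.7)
The plan is to translate the K\"ahler condition $d\tilde\omega = 0$ into explicit conditions on the data $(g_t, u, \Theta(L))$ by a bidegree analysis, while reading off the component formulas by evaluating $\tilde g$ on the frame adapted to the $\mathbb{C}^*$-action.

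First I would record the infinitesimal geometry of the $\mathbb{C}^*$-action on $L^*$. The identity $\partial/\partial z_0 = V - \sqrt{-1}S$ together with $\tilde J(\partial/\partial z_0) = \sqrt{-1}\,\partial/\partial z_0$ yields $\tilde J V = S$ and $\tilde J S = -V$. Since $t$ depends only on $\sqrt{h(v)}$, which is $\mathbb{S}^1$-invariant and scales under $\mathbb{R}^+$, and since $u^2 = \tilde g(V,V)$ by definition, one obtains $dt(V) = u$, $dt(S) = 0$, and consequently $(dt\circ \tilde J)(V) = 0$, $(dt\circ \tilde J)(S) = -u$. Combined with $\pi_* V = \pi_* S = 0$ and $\pi_*(\partial/\partial z_\alpha) = \partial/\partial z_\alpha$ on $M$ for $\alpha \geq 1$, the ``moreover'' formulas for $\tilde g_{0\bar 0}$, $\tilde g_{\alpha\bar 0}$, and $\tilde g_{\alpha\bar\beta}$ follow by a direct expansion of $\pi^*g_t + dt^2 + (dt\circ\tilde J)^2$ on the complex frame $\partial/\partial z_\alpha, \partial/\partial\bar z_\beta$.

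Next I would pass to the associated $(1,1)$-form. A short computation using $\tilde g(\tilde J X, \tilde J Y) = \tilde g(X, Y)$ and $\tilde J^2 = -I$ gives
\[
\tilde\omega \;=\; \pi^*\omega_t \,-\, dt \wedge (dt \circ \tilde J) \;=\; \pi^*\omega_t \,+\, dt \wedge d^c t,
\]
with the convention $d^c t := -\,dt\circ\tilde J$. Because $\omega_t$ depends on $t$ and $t$ is a function on $L^*$, the exterior derivative splits as $d(\pi^*\omega_t) = \pi^*(d_M\omega_t) + dt \wedge \pi^*\dot\omega_t$; combining with $d(dt \wedge d^c t) = -\,dt \wedge dd^c t$ one obtains
\[
d\tilde\omega \;=\; \pi^*(d_M\omega_t) \,+\, dt \wedge \bigl(\pi^*\dot\omega_t - dd^c t\bigr).
\]
The bidegree decomposition ``contains $dt$'' versus ``pulled back from $M$'' forces (a) $d_M\omega_t = 0$ for every $t$, i.e.\ each $g_t$ is K\"ahler on $M$, and (b) the $dt$-free component of $\pi^*\dot\omega_t - dd^c t$ vanishes. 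To make (b) explicit, I would compute $dd^c t$ using $t = t(\sqrt{h(v)})$ together with $\Theta(L) = -\sqrt{-1}\,\partial\bar\partial\log h$; expanding $\partial\bar\partial t$ in the horizontal/vertical splitting dictated by the Chern connection of $(L,h)$ produces a purely vertical term (annihilated after wedging with $dt$) and a base-horizontal contribution equal to $-\,u(t)\,\pi^*\Theta(L)$. Substituting into (b) yields $\dot\omega_t = -u(t)\,\Theta(L)$, which integrates to $\omega_t = \omega_0 - U\,\Theta(L)$ with $U'(t) = u(t)$. The converse is obtained by reading the argument backwards.

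The main obstacle is the explicit evaluation of $dd^c t$ and the careful separation of its horizontal and vertical pieces. Getting the coefficient $-u(t)$ of $\pi^*\Theta(L)$ exactly right requires tracking the chain rule for $t = t(\sqrt{h(v)})$, the normalization $U'(t) = u(t)$ that converts a $t$-derivative into a radial derivative carrying a factor of $u$, and the sign conventions for $\Theta(L)$ and $d^c$, while verifying that the vertical remainder does indeed disappear upon wedging with $dt$.
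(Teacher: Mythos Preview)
The paper does not actually prove this lemma; it is quoted verbatim from Koiso--Sakane \cite{KSI} and immediately used without argument. So there is no ``paper's own proof'' to compare your proposal against.

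That said, your outline is the standard route to this result and is correct in its architecture: write $\tilde\omega = \pi^*\omega_t + dt\wedge d^c t$, differentiate, and separate the $dt$-free and $dt$-containing pieces of $d\tilde\omega$. One point deserves more care than you indicate. When you assert that $dd^c t$ splits into ``a purely vertical term \ldots and a base-horizontal contribution equal to $-u(t)\,\pi^*\Theta(L)$,'' you are implicitly discarding mixed horizontal--vertical terms and a possible extra horizontal piece of the form $t''(\tau)\,\partial\phi\wedge\bar\partial\phi$ coming from the chain rule on $t = t(\log h)$. These terms do not simply vanish; rather, after wedging with $dt$ the resulting $3$-form constraints must be analyzed component by component (purely horizontal, and horizontal~$\wedge~d^c t$), and it is the combination of those constraints that collapses to $\dot\omega_t = -u\,\Theta(L)$. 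You flag exactly this as ``the main obstacle,'' so you are aware of it, but your one-sentence summary of the outcome overstates how cleanly the splitting falls out. If you intend to write this up in full, work in the frame $\{dt, d^c t, \pi^*\text{(horizontal forms)}\}$ and track each bigraded piece of $dd^c t$ separately; the cross terms then visibly cancel or get absorbed, and the coefficient $u(t)$ emerges from $dt(V)=u$ together with the normalization $U'(t)=u$.
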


Define $p=\det(g_0^{-1} \cdot g_t)$, then $\det(\widetilde{g})=2 u^2 \cdot p \cdot \det(g_0)$. We now address the compactification at zero, which corresponds to considering K\"ahler metrics on line bundles. It is convenient to reparameterize and introduce two functions $\phi(U)=u^2 (t)$ and $Q(U)=p$. The following lemma characterizes the role of the generating function $\phi(U)$ of a smooth K\"ahler metric in the form of \eqref{gtilde} on the total space of $L \to M$.

\begin{lemma}[{\cite[Lemma 3.4]{YZ}}]
Let $(L,h)$ be a Hermitian line bundle over a compact K\"ahler manifold $(M,g_0)$, where the eigenvalues of the curvature $\Theta(L)$ with respect to $g_0$ are constant on $M$. Let $[U_{min},U_{max})$ be the range of $U$ so that $g_t=g_0-U \Theta(L)$ remains positive on $[U_{min},U_{max})$, where $U_{max}=\infty$ is allowed.

Let $\phi(U)$ be a smooth function on the interval $[U_{min},U_{max})$ which is positive on $(U_{min},U_{max})$ and satisfies $\phi(U_{min})=0$ and $\phi'(U_{min})=2$. Moreover, for any $U \in (U_{min},U_{max})$:
\begin{align}
\int_{U}^{U_{max}} \frac{dU}{\phi(U)}=+\infty.
\end{align}
Then we can solve for $t$ as a strictly increasing function of $\sqrt{h}$ which is defined on $[0,\infty)$. Moreover, we may choose $t_{min}>-\infty$ where $(t_{min}, t_{max})$ is the range of $t$, and the correspoding $\widetilde{g}$ in \eqref{gtilde} is a smooth K\"ahler metric on the total space of $L$.
\end{lemma}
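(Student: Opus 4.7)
The plan is to reconstruct $U(t)$ and $u(t)$ from the prescribed $\phi$, reparameterize in terms of the norm $s=\sqrt{h(v)}$, and then verify smoothness of $\widetilde{g}$ across the zero section of $L$, following the Calabi--Koiso--Sakane scheme.

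From the defining relations $\phi(U)=u^2(t)$ and $U'(t)=u(t)$ one gets the autonomous ODE $U'(t)=\sqrt{\phi(U)}$, which separates to
\begin{equation*}
t-t_{min}=\int_{U_{min}}^{U}\frac{dV}{\sqrt{\phi(V)}}.
\end{equation*}
The hypotheses $\phi(U_{min})=0$ and $\phi'(U_{min})=2$ give $\phi(V)\sim 2(V-U_{min})$ near $V=U_{min}$, so the integrand behaves like $(2(V-U_{min}))^{-1/2}$ and the integral converges; we may thus pin down $t_{min}$ to any finite value. This defines $t$ as a smooth, strictly increasing function of $U\in[U_{min},U_{max})$.

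Next I would translate $t$ into a function of $s=\sqrt{h}$. By construction, the $\mathbb{R}^+$-generator $V$ satisfies $V\cdot t=u$, and along a fiber it acts by scaling, giving $V=\tfrac{1}{2}s\,\partial_s$ in Calabi's normalization. Hence $\tfrac{s}{2}\tfrac{dt}{ds}=u(t)$, and the substitution $dU=u\,dt$ yields
\begin{equation*}
\frac{d\log s}{dU}=\frac{2}{\phi(U)}, \qquad \log s = 2\int_{U_0}^{U}\frac{dV}{\phi(V)}+\mathrm{const}.
\end{equation*}
The integrability hypothesis $\int^{U_{max}} dU/\phi(U)=+\infty$ forces $\log s\to+\infty$ as $U\to U_{max}$, while near $U_{min}$ the expansion $1/\phi\sim 1/(2(U-U_{min}))$ gives $\log s\sim\log(U-U_{min})+\mathrm{const}$, so $s\to 0$. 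Inverting, $t$ becomes a strictly increasing smooth function of $s\in(0,\infty)$ extending continuously to $s=0$ with $t(0)=t_{min}>-\infty$.

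Finally, I would check smoothness and the K\"ahler property of $\widetilde{g}$ on all of $L$. Away from the zero section, the previous lemma already exhibits smooth components and K\"ahlerness, provided each $g_t=g_0-U\,\Theta(L)$ is K\"ahler and positive definite, which holds throughout the chosen range of $U$. The only delicate point is the zero section, where $t$ ceases to be a good coordinate. The normalization $\phi'(U_{min})=2$ is engineered exactly so that, in a holomorphic trivialization of $L$, $u^2=\phi(U)\sim 2(U-U_{min})$ and $U-U_{min}\sim c\,s^2$; this makes the components $\widetilde{g}_{0\bar 0}=2u^2$, $\widetilde{g}_{\alpha\bar 0}=2u\,\partial_\alpha t$, $\widetilde{g}_{\alpha\bar\beta}=g_{t,\alpha\bar\beta}+2\partial_\alpha t\,\partial_\beta t$ smooth functions of the fiber coordinate at $s=0$. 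I expect this final smooth-extension step at the zero section to be the main technical obstacle, although it is really a calculation once the two normalization conditions are in place.
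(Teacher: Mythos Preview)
The paper does not contain a proof of this lemma; it is quoted verbatim with a citation to \cite[Lemma 3.4]{YZ} and used as a black box for the construction that follows. There is therefore no in-paper argument to compare your proposal against.

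That said, your outline is the standard Calabi/Koiso--Sakane argument and is structurally correct. One small slip: with the $\mathbb{R}^+$-action $v\mapsto\lambda v$ the generator acts as $V=s\,\partial_s$ on radial functions (not $\tfrac{1}{2}s\,\partial_s$), so $V\cdot t=s\,dt/ds=u$ and $d\log s/dU=1/\phi(U)$ rather than $2/\phi(U)$. This constant does not affect your conclusions that $s\to 0$ as $U\to U_{min}$ and $s\to\infty$ as $U\to U_{max}$. You are also right that the smoothness across the zero section is the only genuine issue, and that the boundary normalization $\phi'(U_{min})=2$ is exactly what makes it work; to close that step cleanly one should write the metric in a holomorphic fiber coordinate $\xi$ and verify that all components depend smoothly on $|\xi|^2$ (equivalently on $s^2$), which follows from $U-U_{min}$ being a smooth function of $s^2$ with nonzero linear term.
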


We will be working here with $L=\OO(-1)$ being the tautological line bundle, i.e., the dual of the hyperplane line bundle, on $\mathbb{P}^{n-1}$. Naturally, we consider its $k$-th power $L^k \to \mathbb{P}^{n-1}$ for a positive integer $k$. Here we assume the base $\mathbb{P}^{n-1}$ is endowed with the Fubini-Study metric $Ric(g_0)=g_0$, hence
\begin{align}
R_{i \bar{j} k \bar{l}}(g_0)=\frac{1}{n}\{(g_0)_{i \bar{j}} (g_0)_{k \bar{l}}+(g_0)_{k \bar{j}} (g_0)_{i \bar{l}}\}. \label{Rijkl}
\end{align}
It follows that
\begin{align}
\widetilde{g}_{i \bar{j}}=(1+\frac{k}{n} U)(g_0)_{i \bar{j}}, \ \Theta(L^k)=-\frac{k}{n}(g_0)_{i \bar{j}}, \ t_{i \bar{j}}=\frac{k}{2n}u (g_0)_{i \bar{j}}. \label{gTHtij}
\end{align}
Here we need to ensure that $U_{min}>-\frac{n}{k}$ and $U_{max}=\infty$.\par
The following lemma gives us the components of the curvature tensor of such a metric $\widetilde{g}$ in a unitary frame.
\begin{lemma}[{\cite[Proposition 3.6]{YZ}}]\label{YZ3.6}
Let $p \in \mathbb{P}^{n-1}$. We assume that $\partial_{\alpha} t=\partial_{\bar{\alpha}} t=0$ $(1 \leq \alpha \leq n-1)$ on the fiber $\pi^{-1}(p)$ of $\pi:L^k \to \mathbb{P}^{n-1}$. Consider the unitary frame ${e_0,e_1,...,e_{n-1}}$ along this fiber:
\begin{align}
e_0=\frac{1}{\sqrt{2 \phi}} \frac{\partial}{\partial z_0}, \ e_i=\frac{1}{\sqrt{(1+\frac{k}{n}U)(g_0)_{i \bar{i}}}} \frac{\partial}{\partial z_i} (1 \leq i \leq n-1). \label{e0ei}
\end{align}
Then the only nonzero curvature components of $\widetilde{g}$ on the total space of $L^k$ are:
\begin{align}
A &:= \widetilde{R}_{0 \bar{0} 0 \bar{0}}=-\frac{1}{2} \frac{d^2 \phi}{d U^2}, \label{AR0000}\\
B &:= \widetilde{R}_{0 \bar{0} i \bar{i}}=\frac{k^2 \phi-k(n+kU) \frac{d \phi}{dU}}{2(n+kU)^2}, \label{BR00ii}\\
C &:= \widetilde{R}_{i \bar{i} i \bar{i}}=2 \widetilde{R}_{i \bar{i} j \bar{j}}=\frac{[2(n+kU)-k^2 \phi]}{(n+kU)^2}, \label{CRiiii}
\end{align}
where $1 \leq i,j \leq n-1$ and $i \neq j$.
\end{lemma}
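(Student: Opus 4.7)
The plan is to apply the coordinate-wise curvature formula
\[
\widetilde{R}_{i\bar{j}k\bar{l}}=-\partial_k\partial_{\bar l}\widetilde{g}_{i\bar j}+\sum_{p,q}\widetilde{g}^{p\bar q}\,\partial_k\widetilde{g}_{i\bar p}\,\partial_{\bar l}\widetilde{g}_{q\bar j}
\]
to the metric $\widetilde{g}$ given by the previous lemma, and then to pass to the unitary frame \eqref{e0ei}. To minimize clutter at a chosen $\widetilde p\in\pi^{-1}(p)$, I would first pick holomorphic coordinates $z_1,\dots,z_{n-1}$ that are normal for $g_0$ at $p$, so that $(g_0)_{i\bar j}(p)=\delta_{ij}$ and $\partial_\gamma(g_0)_{i\bar j}(p)=0$, together with a geodesic local frame $e_{L^k}$ for $L^k$, so that $\partial_i\log h(e_{L^k})(p)=0$ and $\partial_i\partial_j\log h(e_{L^k})(p)=0$. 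Combined with the hypothesis $\partial_\alpha t(\widetilde p)=\partial_{\bar\alpha}t(\widetilde p)=0$, these normalizations render $\widetilde g$ and $\widetilde g^{-1}$ diagonal at $\widetilde p$ and force the purely holomorphic and purely antiholomorphic second derivatives of $t$ to vanish.

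Two derivative identities then drive the computation. First, since $t$ is rotation-invariant, $S\cdot t=0$; combining this with $\partial/\partial z_0=V-\sqrt{-1}S$ and $\widetilde g(V,V)=u^2=(V\cdot t)^2$ forces $\partial_0 t=\partial_{\bar 0}t=u$, hence $\partial_0 U=\partial_{\bar 0}U=\phi(U)$. Second, the identity $t_{i\bar j}=\tfrac{k}{2n}u(g_0)_{i\bar j}$ recorded in \eqref{gTHtij} is obtained by writing $t$ as a function of $|v|_h$ in the trivialization, computing $\partial_i\partial_{\bar j}\log|v|_h=\tfrac{k}{2n}(g_0)_{i\bar j}$ from $\Theta(L^k)=-\tfrac{k}{n}g_0$, and using $s\,dt/ds=u$. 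In particular $\partial_i\partial_{\bar i}U=u\,t_{i\bar i}=\tfrac{k}{2n}\phi$ at $\widetilde p$.

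With these in hand each component is a short chain-rule calculation. For $A$: $-\partial_0\partial_{\bar 0}(2\phi)=-2(\phi''\phi^2+(\phi')^2\phi)$ while the quadratic correction is $(2\phi)^{-1}|\partial_0(2\phi)|^2=2\phi(\phi')^2$, yielding $-2\phi^2\phi''$ in coordinates and hence $A=-\tfrac12\phi''$ after dividing by $(2\phi)^2$. For $B=\widetilde R_{0\bar 0 i\bar i}$: $-\partial_i\partial_{\bar i}\widetilde g_{0\bar 0}=-2\phi'\partial_i\partial_{\bar i}U=-\tfrac{k}{n}\phi'\phi$, and the quadratic sum reduces to the single term $\widetilde g^{i\bar i}|\partial_i\widetilde g_{0\bar i}|^2=\tfrac{k^2\phi^2}{n(n+kU)}$ via $\partial_i\widetilde g_{0\bar i}=2u\,t_{i\bar i}=\tfrac{k}{n}\phi$; normalization by $2\phi(1+\tfrac{k}{n}U)$ produces the stated formula. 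For $C=\widetilde R_{i\bar i i\bar i}$: the quadratic sum vanishes at $\widetilde p$ since all $\partial_i\widetilde g_{i\bar p}$ are zero there, so $C|_{\text{coord}}=-\partial_i\partial_{\bar i}\widetilde g_{i\bar i}$ for $\widetilde g_{i\bar i}=(1+\tfrac{k}{n}U)(g_0)_{i\bar i}+2|\partial_i t|^2$, and three second-order contributions arise: $\tfrac{k^2\phi}{2n^2}$ from $\partial_i\partial_{\bar i}U$, $-\tfrac{2(n+kU)}{n^2}$ from the Fubini--Study curvature $R^{g_0}_{i\bar i i\bar i}=\tfrac{2}{n}$ in \eqref{Rijkl}, and a further $\tfrac{k^2\phi}{2n^2}$ from $2|t_{i\bar i}|^2$; summing and normalizing by $(1+\tfrac{k}{n}U)^2$ yields the stated $C$. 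The factor-of-two relation $C=2\widetilde R_{i\bar i j\bar j}$ ($i\neq j$) then follows from the vanishing of $|t_{i\bar j}|^2$ and from $R^{g_0}_{i\bar i j\bar j}=\tfrac{1}{n}$.

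The main obstacle I anticipate is the careful identification of the mixed second derivatives $t_{i\bar j}$ and their squares: these do not vanish at $\widetilde p$ even though $\partial_i t$ does, and they are precisely the mechanism by which the curvature of $L^k$ and of the Fubini--Study base enter $\widetilde g$. Once they are in hand and the geodesic trivialization is exploited to kill $\partial_i\partial_j t$, the rest is bookkeeping: tracking which index patterns survive under the diagonality of $\widetilde g$ at $\widetilde p$ and matching the resulting powers of $(n+kU)$ through the normalizations in \eqref{e0ei}. Vanishing of all other components follows from the diagonal structure of $\widetilde g$ at $\widetilde p$ together with the restricted index pattern of \eqref{Rijkl}.
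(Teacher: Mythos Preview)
The paper does not give its own proof of this lemma; it is stated with attribution to \cite[Proposition 3.6]{YZ} and used as a black box. Your computation is correct and is essentially the standard direct verification one would expect: choosing normal coordinates for $g_0$ and a geodesic frame for $L^k$ to diagonalize $\widetilde g$ at the chosen point, then running the K\"ahler curvature formula with the chain-rule identities $\partial_0 U=\phi$, $\partial_i\partial_{\bar i}U=\tfrac{k}{2n}\phi$, and $t_{i\bar j}=\tfrac{k}{2n}u\,\delta_{ij}$. The one place to be slightly careful in your write-up is the sign bookkeeping for $C$: the three terms you list are the contributions to $\partial_i\partial_{\bar i}\widetilde g_{i\bar i}$, and it is only after negating their sum and dividing by $(1+\tfrac{k}{n}U)^2$ that the stated $C$ emerges; as written the signs are a bit ambiguous, though your conclusion is correct.
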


\section{Proofs of Theorem \ref{main_theorem} and Theorem \ref{thm_conj_form}}

Lemma \ref{YZ3.6} leads to the following characterization of $U(n)$-invariant K\"ahler metrics with positive holomorphic sectional curvature via the discriminant criterion for zero-free quadratic real polynomials.
\begin{proposition}[{\cite[Proposition 3.7]{YZ}}]\label{YZ3.7inf}
Any $U(n)$-invariant K\"ahler metric on the total space of $L^k \to \mathbb{P}^{n-1}$ has positive holomorphic sectional curvature if and only if
\begin{equation}
A>0,\quad C> 0,\quad 2B +\sqrt{AC} >0. 
\end{equation}
In other words, it is characterized by a smooth function $\phi(U)$ on $[U_{min},+\infty)$ for some $U_{min}>-\frac{n}{k}$ such that the following conditions hold:
\begin{enumerate}
\item  $\phi>0$ and $\phi''<0$ on $(U_{min},+\infty)$, $\phi(U_{min})=0$, $\phi'(U_{min})=2$.

\item $\int_{U}^{+\infty} \frac{1}{\phi(U)} dU=+\infty$ for any $U \in [U_{min},+\infty)$.

\item $\phi(U)<\frac{2}{k^2}(n+kU)$, and $\frac{k \phi}{n+kU}-\phi'>-\sqrt{(-\frac{1}{2} \phi'')(\frac{2}{k^2}(n+kU)-\phi)}$ for any $U \in [U_{min},+\infty)$.
\end{enumerate}
\end{proposition}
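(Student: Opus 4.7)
The plan is to reduce positivity of the holomorphic sectional curvature $H$ to positivity of a single real quadratic polynomial on $[0,\infty)$, and then to apply a classical root-location criterion. The key simplification comes from the $U(n)$-invariance of $\widetilde g$: the isotropy $U(n-1)$ of a base point $p\in\PP^{n-1}$ acts unitarily on the horizontal subspace $\spann(e_1,\ldots,e_{n-1})$, so any unit tangent vector $\xi\in T_v(L^k)$ at $v\in\pi^{-1}(p)$ can be brought into the form $\xi=\xi_0 e_0+\xi_1 e_1$ with $a:=|\xi_0|^2\in[0,1]$ and $|\xi_1|^2=1-a$. I would then expand $H(\xi)=\sum \widetilde R_{i\bar j k\bar l}\xi_i\bar\xi_j\xi_k\bar\xi_l$ using Lemma \ref{YZ3.6} and the K\"ahler identities $\widetilde R_{0\bar 1 1\bar 0}=\widetilde R_{1\bar 0 0\bar 1}=\widetilde R_{1\bar 1 0\bar 0}=\widetilde R_{0\bar 0 1\bar 1}=B$. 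All four cross terms equal $B\,|\xi_0|^2|\xi_1|^2$, so they combine to the coefficient $4B$ rather than $2B$; this bookkeeping is the one place where I expect care is needed. The resulting formula is
$$H(\xi)=A\,a^2+4B\,a(1-a)+C\,(1-a)^2,$$
so positivity of $H$ on the whole unit tangent sphere is equivalent to positivity of this quadratic in $a$ throughout $[0,1]$.

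Next, after the substitution $t=a/(1-a)\in[0,\infty)$, the requirement becomes $g(t):=At^2+4Bt+C>0$ for every $t\ge 0$, together with $A>0$ read off from the behavior as $t\to\infty$; evaluation at $t=0$ gives $C>0$. To finish this step I would argue as follows: if $B\ge 0$, positivity on $[0,\infty)$ is immediate from $A,C>0$; if $B<0$, then both the sum $-4B/A$ and the product $C/A$ of the roots of $g$ are positive, so $g$ takes a nonpositive value on $[0,\infty)$ unless its discriminant is strictly negative, i.e.\ $4B^2<AC$. Both cases are packaged into the single inequality $2B+\sqrt{AC}>0$, which yields the first half of the proposition.

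Finally, I would translate the three inequalities $A>0$, $C>0$, $2B+\sqrt{AC}>0$ into the stated conditions on $\phi$ via the formulas of Lemma \ref{YZ3.6}. The first two immediately give $\phi''<0$ and $\phi<\frac{2}{k^2}(n+kU)$. For the third, the factorizations
$$2B=\frac{k}{n+kU}\left(\frac{k\phi}{n+kU}-\phi'\right),\qquad \sqrt{AC}=\frac{k}{n+kU}\sqrt{\left(-\tfrac12\phi''\right)\left(\tfrac{2}{k^2}(n+kU)-\phi\right)}$$
allow me to divide the inequality $2B+\sqrt{AC}>0$ through by the positive factor $k/(n+kU)$ to recover the form displayed in the statement. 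The remaining requirements $\phi>0$, $\phi(U_{\min})=0$, $\phi'(U_{\min})=2$, and the integral divergence $\int^{\infty}dU/\phi=+\infty$ are inherited verbatim from the earlier lemma ensuring that the Calabi--Koiso--Sakane ansatz produces a smooth complete K\"ahler metric on the total space of $L^k$ in the first place.
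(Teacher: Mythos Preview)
Your proposal is correct and follows essentially the same approach as the paper. The paper's version differs only cosmetically: it writes $H(X)=A|x_0|^4+4B|x_0 x_1|^2+C|x_1|^4$ and substitutes $y=|x_1|^2/|x_0|^2$ (the reciprocal of your $t$), obtaining the quadratic $h(y)=A+4By+Cy^2$ with the roles of $A$ and $C$ swapped at the endpoints, but the discriminant analysis and the conclusion $2B+\sqrt{AC}>0$ are identical; your explicit translation back to the inequalities on $\phi$ via the factorizations of $2B$ and $\sqrt{AC}$ is a nice touch that the paper leaves implicit.
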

We remark that ``Condition (iii) and $\phi''<0$'' is equivalent to $H > 0$ on any tangent space. In the proof of \cite[Proposition 3.7]{YZ}, the expression of the holomorphic sectional curvature is given as
$$
H(X)=A |x_0|^4+4B |x_0 x_1|^2+C |x_1|^4,
$$
where $X=x_0 e_0+x_1 e_1$ is a (1,0)-vector at any point with $|x_0|^2+|x_1|^2=1$. When $|x_0|^2=0$ it is clear that $H(X)>0$ is equivalent to $C>0$. In the case $|x_0|^2>0$, $H(X)>0$ is equivalent to
$$
h(y)=A+4B y+C y^2>0,
$$
where $y=\frac{|x_1|^2}{|x_0|^2} \in [0,+\infty)$. First let $h(0)>0, \lim\limits_{y \to +\infty} h(y)>0$, we have $A,C>0$. If $B>0$, all the coefficients are positive; if $B \leq 0$, let $\Delta=16 B^2-4AC<0$, we get $(-2B)^2<AC$. Therefore, in terms of curvature components in Lemma \ref{YZ3.6}, ``Condition (iii) and $\phi''<0$ '' reads $A,C>0$ and $2B+\sqrt{AC}>0$.

For the semi-positive case with a zero curvature value, we set $2B+\sqrt{AC}=0$ and thus change the second inequality in (iii) into equality. We then determine $\phi$ satisfying the (new) conditions.

Set
\begin{align}
\frac{k \phi}{n+kU}-\phi'=-\sqrt{(-\frac{1}{2} \phi'')(\frac{2}{k^2}(n+kU)-\phi)}. \label{3.7iiieq}
\end{align}
We observe that if we set $\phi(U)=(n+kU) \psi(U)$, then
\begin{align*}
\phi'(U)&=k \psi(U)+(n+kU) \psi'(U), \\
\phi''(U)&= 2k \psi'(U)+(n+kU) \psi''(U),
\end{align*}
and the equation becomes
$$
(n+kU) \psi'=\sqrt{\frac{1}{2} (2k \psi'+(n+kU) \psi'') (n+kU) (\psi-\frac{2}{k^2})}.
$$
Since $n+kU>0$, we have $\psi' \geq 0$. We need to find a function $\psi$ satisfying:
\begin{align}
2 \psi'^2=(\frac{2k}{n+kU} \psi'+\psi'')(\psi-\frac{2}{k^2}). \label{psieq}
\end{align}
Note that there is a fraction with denominator $n+kU$ on the right hand side of the equation and we are thus led to making another Ansatz of the form:
$$
\psi(U)=\frac{a+bU}{c+dU} \quad (b,d \neq 0).
$$
Without loss of generality, we can rewrite the fraction
$$
\frac{a+bU}{c+dU}=\frac{a'+b' U}{c'+U}
$$
by letting $a'=\frac{a}{d},b'=\frac{b}{d},c'=\frac{c}{d}$. So the number of the ``degrees of freedom'' in the fraction in the Ansatz is three. Nevertheless, we will be using $a,b,c,d$ in the following discussion. The first and second derivatives of $\psi$ are:
\begin{align*}
\psi'(U)&=\frac{bc-ad}{(c+dU)^2},\\
\psi''(U)&=\frac{2d(ad-bc)}{(c+dU)^3}.
\end{align*}
We substitute $\psi,\psi',\psi''$ above into \eqref{psieq} to obtain:
$$
2(\frac{bc-ad}{(c+dU)^2})^2=(\frac{2k(bc-ad)}{(n+kU)(c+dU)^2}+\frac{2d(ad-bc)}{(c+dU)^3})(\frac{a+bU}{c+dU}-\frac{2}{k^2}),
$$
which is equivalent to
\begin{align*}
&2(\frac{bc-ad}{(c+dU)^2})^2-(\frac{2k(bc-ad)}{(n+kU)(c+dU)^2}+\frac{2d(ad-bc)}{(c+dU)^3})(\frac{a+bU}{c+dU}-\frac{2}{k^2})\\
=\ &\frac{2(ad-bc)(a k^3-b n k^2-2 c k+2 d n)}{k^2 (n+kU)(c+dU)^3}=0.
\end{align*}
Note that if $ad-bc=0$, $\psi \equiv \frac{b}{d}$ is a constant function and $\phi''(U)=0$ does not satisfy (i) in Proposition \ref{YZ3.7inf}. Therefore $ad-bc \neq 0$ and
\begin{align}
a k^3-b n k^2-2 c k+2 d n=0. \label{cond1}
\end{align}
Setting $\phi(U)=0$, the two solutions are $-\frac{n}{k}$ and $-\frac{a}{b}$. Since $U_{min}>-\frac{n}{k}$, we have $U_{min}=-\frac{a}{b}$. The denominator of $\phi(U_{min})$ is $c+dU_{min}=-\frac{ad-bc}{b} \neq 0$. Moreover, the condition $\phi'(U_{min})=2$ gives:
\begin{align}
\phi'(-\frac{a}{b})=\frac{b(ak-bn)}{ad-bc}=2. \label{cond2}
\end{align}
Now there are two equations. Considering the left hand side of \eqref{cond1} as a polynomial of $n,k$ with all coefficients 0, let $t_1$ be the new parameter and
\begin{align*}
a(n,k)&=a_0(n,k)+a_1(n,k) t_1,\\
b(n,k)&=b_0(n,k)+b_1(n,k) t_1,\\
c(n,k)&=c_0(n,k)+c_1(n,k) t_1,\\
d(n,k)&=d_0(n,k)+d_1(n,k) t_1.
\end{align*}
We set $a_0(n,k)=2n$, \ $b_0(n,k)=2k$ to cancel out the terms with degree $>1$ and $b_1(n,k)=d_1(n,k)=0$ to deal with $n$. Then we substitute $a,b,c,d$ into \eqref{cond1} and \eqref{cond2}:
\begin{align*}
(2n d_0-2k c_0)+(k^3 a_1-2k c_1)t_1&=0,\\
(4k c_0-4n d_0)+(2k^2 a_1-2 a_1 d_0+4k c_1)t_1&=0.
\end{align*}
Then $a_1,c_0,c_1,d_0$ satisfy:
\begin{align}
k c_0-n d_0&=0,\label{c0d0}\\
k^3 a_1-2k c_1&=0,\label{a1c1}\\
2k^2 a_1-2 a_1 d_0+4k c_1&=0.\label{a1c1d0}
\end{align}
First, substituting \eqref{a1c1} into \eqref{a1c1d0}, we get
$$
2a_1 (k^2+k^3-d_0)=0.
$$
If $a_1=0$, then $c_1=0$ by \eqref{a1c1} and we let $c_0=n t_2, d_0=k t_2$ so that \eqref{c0d0} is satisfied. However, $ad-bc=(2n)(k t_2)-(2k)(n t_2)=0$ so there is a contradiction. Since $a_1 \neq 0$, we can solve for:
\begin{align*}
c_0&=nk(k+1),\\
d_0&=k^2 (k+1),\\
c_1&=\frac{k^2}{2} a_1.
\end{align*}
Setting $a_1=2$, we obtain $c_1=k^2$ and
\begin{equation}\label{phiequals}
\phi(U)=\frac{2(n+kU)(n+t_1+kU)}{k((k+1)(n+kU)+k t_1)}.
\end{equation}

Now we need to determine one constant $t_1$ such that $U_{min}=-\frac{n+t_1}{k}>-\frac{n}{k}$. It is equivalent to $t_1<0$. We now check the remaining conditions from the items (i)-(iii) in Proposition \ref{YZ3.7inf}: 
\begin{enumerate}
\item[(i)] When $U>U_{min}=-\frac{n+t_1}{k}$,
$$(k+1)(n+kU)+k t_1>k(n+kU)+kt_1=k(n+kU+t_1)>0,$$
so $\phi(U)>0$.\par

Differentiate $\phi$ twice,
$$
(\frac{(n+kU)(a+bU)}{c+dU})''=-\frac{2(ad-bc)(kc-nd)}{(c+dU)^3},
$$
where
\begin{align*}
ad-bc&=(2n+2t_1)(k^2(k+1))-(2k)(nk(k+1)+k^2 t_1)=2k^2 t_1,\\
kc-nd&=k(nk(k+1)+k^2 t_1)-n(k^2(k+1))=k^3 t_1.
\end{align*}
Then
$$\phi''(U)=-\frac{4 k^2 t_1^2}{((k+1)(n+kU)+k t_1)^3}<0.$$

\item[(ii)] For any $\tilde{U} \geq U_{min}$,
\begin{align*}
\int_{\tilde{U}}^{+\infty} \frac{1}{\phi(U)} dU&=\int_{\tilde{U}}^{+\infty} \frac{k((k+1)(n+kU)+k t_1)}{2(n+kU)(n+kU+t_1)} dU\\
&=\int_{\tilde{U}}^{+\infty} \frac{k}{2} \cdot \frac{k}{n+kU}+\frac{1}{2} \cdot \frac{k}{n+kU+t_1} dU\\
&=\frac{1}{2} (k \log(n+kU)+\log(n+kU+t_1))|^{+\infty}_{\tilde{U}}=+\infty.
\end{align*}

\item[(iii)] Evaluate the difference:
\begin{align*}
\phi(U)-\frac{2}{k^2}(n+kU)&=(n+kU)(\frac{a+bU}{c+dU}-\frac{2}{k^2})\\
&=(n+kU)(\frac{(k^2 a-2c)+(k^2 b-2d)U}{k^2 (c+dU)})\\
&=-\frac{2(n+kU)^2}{k^2 ((k+1)(n+kU)+k t_1)}<0.
\end{align*}
\end{enumerate}

One can sum up the above by stating that we have found the family of functions
$$
\{\phi_{t_1}(U)=\frac{2(n+kU)(n+kU+t_1)}{k((k+1)(n+kU)+k t_1)}:t_1<0\},
$$
yielding a K\"ahler metric with semi-positive holomorphic sectional curvature on the total space of $L^k \to \mathbb{P}^{n-1}$ for every negative $t_1$. By construction, the metric satisfies $r^+_M<n$. This puts us in a position to prove Theorem \ref{main_theorem}.

\begin{proof}[Proof of Theorem \ref{main_theorem}]
We apply the above discussion in the case $n=2$. It is a basic fact that the holomorphic sectional curvature of a K\"ahler metric completely determines the curvature tensor $R$ (\cite[Proposition 7.1,~p.~166]{kobayashi_nomizu_ii}). In particular, if $H$ vanishes identically, then $R$ vanishes identically. This rules out the case that $r^+_M=0$. We know by construction that $r^+_M<n=2$, and thus we are left with $r^+_M=1$.\par
In order to prove the completeness of the metric, by the Theorem of Hopf-Rinow, it suffices to establish that closed and bounded sets are compact. To this end, for a real number $s > 0$, let
$$
c:(0,s) \to L, t \mapsto (t,0)
$$
be a real curve in the above local coordinates $z_0,z_1$. Then
\begin{align*}
\int_0^s g(c'(t),c'(t)) dt = \int_0^s 2u^2dt
\end{align*}
which for $s$ going to infinity is a divergent integral based on \eqref{phiequals}. This ensures that closed and bounded sets with respect to the metric are also bounded in the standard distance on the fibers and thus compact by the Heine-Borel Theorem.
\end{proof}
\begin{remark}
The completeness argument for the metric in the proof of Theorem \ref{main_theorem} carries over verbatim to the case of arbitrary $n$.
\end{remark}
By computing the holomorphic sectional curvature of the product metric on the product of Hermitian manifolds as it was done in the proof of \cite[Theorem 1.2]{ACH}, we can find the relationship between the ``amount of zeroes'' of the curvature of Hermitian manifolds with semi-positive holomorphic sectional curvatures before and after taking the product. In fact, in the following lemma, we find the expected {\it addition formula} to hold.

\begin{lemma}\label{prod_lemma}
Let $M$ and $N$ be Hermitian manifolds with semi-positive holomorphic sectional curvatures $H_M$ and $H_N$ respectively. Let $M \times N$ be the product manifold equipped with the product metric and holomorphic sectional curvature $H_{M \times N}$. Then $r^+_{M \times N}=r^+_M+r^+_N$.
\end{lemma}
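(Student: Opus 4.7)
The plan is to use the fact that the curvature tensor of a product Hermitian metric splits as a direct sum of the pullbacks of the factor curvatures, with no mixing terms. Writing a tangent vector at $(p,q) \in M \times N$ as $v = v_M + v_N \in T_p M \oplus T_q N$, this yields the well-known formula (used in the proof of \cite[Theorem 1.2]{ACH})
\[
H_{M \times N}(v) = \frac{|v_M|^4 H_M(v_M) + |v_N|^4 H_N(v_N)}{(|v_M|^2 + |v_N|^2)^2},
\]
with the convention that a summand vanishes whenever its tangent vector does. Since $H_M, H_N \geq 0$, the two terms in the numerator are non-negative, so $H_{M \times N}(v) = 0$ forces each of them individually to vanish.

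The body of the argument is then the pointwise identity $\eta(p,q) = \eta(p) + \eta(q)$, where $\eta$ is as in the definition of $r^+$. For the ``$\geq$'' direction, pick maximal subspaces $L_M \subset T_p M$ and $L_N \subset T_q N$ on which $H_M$ and $H_N$ respectively vanish; by the displayed formula, $H_{M \times N}$ vanishes on $L_M \oplus L_N$, whose dimension is $\eta(p) + \eta(q)$. For the ``$\leq$'' direction, let $L \subset T_{(p,q)}(M \times N)$ be any subspace on which $H_{M \times N}$ vanishes, and let $L^M$ and $L^N$ denote the images of $L$ under the projections onto $T_p M$ and $T_q N$. For every non-zero $w \in L^M$ there exists some $v = w + v_N \in L$; the formula together with the non-negativity observation forces $H_M(w) = 0$. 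Thus $L^M$ is a subspace of $T_p M$ on which $H_M$ vanishes identically, so $\dim L^M \leq \eta(p)$, and symmetrically $\dim L^N \leq \eta(q)$. Since $L \subset L^M \oplus L^N$, we conclude $\dim L \leq \eta(p) + \eta(q)$.

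Taking minima over $(p,q)$ then yields $\eta_{M \times N} = \eta_M + \eta_N$, and the identity $r^+_{M \times N} = r^+_M + r^+_N$ drops out of $r^+_M = \dim_{\mathbb{C}} M - \eta_M$. The only step requiring genuine care is the upper-bound projection argument, where one must verify that the projected vector $w \in L^M$ inherits the vanishing of $H_M$; this is exactly where the non-negativity of each summand is essential, since an analogous statement would fail for a metric whose holomorphic sectional curvature changes sign (then one summand could offset the other without either vanishing).
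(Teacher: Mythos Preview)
Your proof is correct and follows essentially the same approach as the paper's: both use the product formula for $H_{M\times N}$ from \cite{ACH} together with the semi-positivity of each summand to show that null directions in the product decompose into null directions in each factor (and conversely combine). Your version is somewhat more explicit than the paper's in spelling out the projection/dimension-count argument $L\subset L^M\oplus L^N$ for the upper bound on $\eta(p,q)$, which the paper leaves implicit in its jump from ``$H_{M\times N}(w)=0\Rightarrow H_M(u)=H_N(v)=0$'' to ``$r^+_{M\times N}\geq r^+_M+r^+_N$''.
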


\begin{proof}
Let $p \in M, u \in T_p M, q \in N, v \in T_q N$ with $H_M(u)=H_N(v)=0$. Let $w=au+bv \in T_{(p,q)} (M \times N)$. By the formulas in the top half of \cite[p.~141]{ACH} it is clear that $H_{M \times N}(w)=0+0=0$. This yields the inequality $r^+_{M \times N} \leq r^+_M+r^+_N$.

On the other hand, let $w \in T_{(p,q)} (M \times N)$ where $p \in M, q \in N$. Choose $u \in T_p M, v \in T_q N$ such that $w=u+v$. Since $H_M, H_N, H_{M \times N}$ are semi-positive, $H_{M \times N}(w)=0$ implies $H_M(u)=H_N(v)=0$. Then $r^+_{M \times N} \geq r^+_M+r^+_N$.
\end{proof}
\begin{remark}
Note that it is in general not true that $H_{M \times N}(u+v)= H_M(u)+H_N(v)$ for arbitrary $u \in T_p M$ and $v \in T_q N$ without the vanishing assumption. However, the formulas on \cite[p.~141]{ACH} do allow the proof of Lemma \ref{prod_lemma} to go through.
\end{remark}
We are now in a position to prove Theorem \ref{thm_conj_form} via a product construction, using the total space of $L^k \to \mathbb{P}^{1}$ with $k=1$ as building block.
\begin{proof}[Proof of Theorem \ref{thm_conj_form}]
Let $M_2$ be the total space of $L^1 \to \mathbb{P}^{1}$ with $r^+_{M_2}=1$ as obtained in the proof of Theorem \ref{main_theorem}. Recall that the natural compactification of $M_2$ is the first Hirzebruch surface, which has ample anti-canonical line bundle.

Recall that for $\mathbb{P}^1$ with the Fubini-Study metric, the holomorphic sectional curvature is a positive constant. So $r^+_{\mathbb{P}^1}=1$.

If $n \geq 2$ is an even number, let $M$ be the product of $\frac{n}{2}$ copies of $M_2$:
$$
M=M_2 \times ... \times M_2,
$$
with the metric on $M$ being the product metric of the metrics obtained in the proof of Theorem \ref{main_theorem}.
By repeated application of the addition formula from Lemma \ref{prod_lemma}, we find $r^+_M=\frac{n}{2} \cdot 1=\lfloor\frac{n+1}{2}\rfloor$. Furthermore, $M$ is also complete and can be compactified to a compact complex manifold with ample anti-canonical line bundle since this is true for the factors and is preserved under taking the product.

If $n \geq 3$ is odd, let $M$ be the product of $\frac{n-1}{2}$ copies of $M_2$ and one copy of $\mathbb{P}^1$ with the obvious product metric. By the addition formula, $r^+_M=\frac{n-1}{2} \cdot 1+1=\frac{n+1}{2}=\lfloor\frac{n+1}{2}\rfloor$ since $n$ is odd. Again, completeness and ample anti-canonical line bundle under compactification are preserved.
\end{proof}
\section{The case of $k=0$}
Due to its reliance on Lemma \ref{YZ3.6}, the proof of Theorem \ref{main_theorem} does not go through in the untwisted case $k=0$. In order to briefly address this case, we would like to address it with a conformal change approach. We let $z_1$ be an inhomogeneous coordinate on an open subset of $\PP^1$ and $z_2$ a coordinate on $\CC$. We consider the Fubini-Study product metric whose associated $(1,1)$-form is given by
$$
\omega=\frac{\sqrt{-1}}{2} \partial \dbar (\log(1+z_1 \bar{z}_1)+\log(1+z_2 \bar{z}_2))
$$
and let $\omega_f=e^f \omega$, where $f=f(z_2 \bar{z}_2)$.\par
By an explicit computation, we see that the numerator of the holomorphic sectional curvature of a tangent vector $\xi=(u,v)$ is
$$
H(\xi)=R_{1 \bar 1 1 \bar 1} |u|^4+R_{1 \bar 1 2 \bar 2} |u|^2 |v|^2+R_{2 \bar 2 2 \bar 2} |v|^4.
$$
Having $H(\xi)\geq 0$ for all tangent vectors $\xi$ and with $H$ not identically zero but at least one tangent vector $\xi_0= (u,v)$ with $H(\xi_0)=0$ is implied by $R_{1 \bar 1 1 \bar 1} > 0$ (which is true) and 
the discriminant condition
\begin{equation*}
R_{1 \bar 1 2 \bar 2}+2 \sqrt{R_{1 \bar 1 1 \bar 1} R_{2 \bar 2 2 \bar 2}}=0,
\end{equation*}
which is equivalent to
\begin{equation*}
-(f'(r)+r f''(r))+2 \sqrt{2} \sqrt{\frac{2}{(1+r)^4}-\frac{f'(r)+r f''(r)}{(1+r)^2}}=0,
\end{equation*}
where $r=z_2 \bar{z}_2$. We solve the ODE by making an Ansatz that
$$
h(r):=rf'(r)=\frac{A}{1+r}+C
$$
with real constants $A,C$, and obtain the conformal factor
$$
e^{f(r)}=e^{C_2} \frac{(1+r)^{4(\sqrt{2}-1)}}{r^{4(\sqrt{2}-1)-C}}.
$$
We see that $\lim_{r\to 0^+} e^{f(r)}$ and $\lim_{r\to \infty}e^{f(r)}$ cannot both have a finite non-zero value at the same time. This precludes us from extending the conformally changed Hermitian metric to a metric on the compact manifold $\PP^1\times \PP^1$.\par
However, choosing $C=4(\sqrt{2}-1)$ and $C_2=0$, gives us
\begin{equation*}
e^{f(r)}=(1+r)^{4(\sqrt{2}-1)}
\end{equation*}
which yields a well-defined metric on $\PP^1\times \CC$ which is complete. However, it is easy to check that this Hermitian metric is not K\"ahler.

\section{Ricci curvature}

For the metric characterized by \eqref{phiequals} on the total space of $L^k \to \mathbb{P}^{n-1}$, the Ricci curvature of a unit (1,0)-vector $X=x_0 e_0+x_1 e_1$ is:
$$
Ric(X)=(A+(n-1)B) |X_0|^2+(B+\frac{n}{2}C) |X_1|^2,
$$
where
\begin{align*}
A+(n-1)B&=\frac{2 k^2 t_1^2+(n-1)k t_1 ((k+1)(n+kU)+k t_1)}{((k+1)(n+kU)+k t_1)^3},\\
B+\frac{n}{2}C&=\frac{k t_1+n ((k+1)(n+kU)+k t_1)}{((k+1)(n+kU)+k t_1)^2}.
\end{align*}

For sufficiently large $U$, we note 
$$A+(n-1)B<0<B+\frac{n}{2}C,$$
making the Ricci curvature clearly not definite and even not semi-definite. However, we can choose $n,k$ such that the coefficients have the same sign on a subset of $[t_1, +\infty)$. Consequently, the Ricci curvature is positive-definite or negative-definite in some regions on the manifold.\par

Somewhat surprisingly, for the unit vector $X=X_0 e_0+X_1 e_1$, if the holomorphic sectional curvature $H(X)=0$, then the Ricci curvature $Ric(X)=-B>0$ is always positive.\par

Due to the indefiniteness of the Ricci curvature, we would like to close by briefly presenting a more delicate discussion in terms of the notion of positivity for sums of eigenvalues of the Ricci curvature of a K\"ahler manifold in the sense of the following definition.

\begin{definition}
For the purpose of this definition, we write the Ricci curvature form for the metric characterized by \eqref{phiequals} as:
$$
Ric=\sqrt{-1}\sum\limits_{i,j=0}^{n-1}\tilde{R}_{ij} e_i^* \wedge \bar e_j^*,
$$
where $(\tilde{R}_{ij})_{i,j=0}^{n-1}$ is a Hermitian $n \times n$ matrix and $\{e_0^*,\ldots,e_{n-1}^*\}$ is the dual basis of $\{e_0,\ldots,e_{n-1}\}$. The Ricci curvature is said to be $\kappa$-{\it positive} at a point $p \in M$ if the eigenvalues of $(\tilde{R}_{ij})_{i,j=0}^{n-1}$ have the property that any sum of $\kappa$ of them is positive.
\end{definition} 

\begin{definition}
Depending on the above parameters $n,k,U$, we define the set
\begin{align*}
K(n,k,U) =\{\kappa\in  \{1,\ldots,n\}: & \text{ the Ricci curvature is }\\
&\ \kappa\text{-}\text{positive at points corresponding to }U\}.
\end{align*}
\end{definition}
In the following proposition, we first observe that these sets are never empty, as they always contain $n$.
\begin{proposition}\label{prop_kappa_n}
For any $n,k,U$, we have $n \in K(n,k,U)$.
\end{proposition}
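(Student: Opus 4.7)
The plan is to reduce the statement to a positivity claim about the trace of the Ricci matrix. By definition of $\kappa$-positivity, $n \in K(n,k,U)$ means that the \emph{sum} of all $n$ eigenvalues of $(\tilde R_{ij})_{i,j=0}^{n-1}$ is positive, and this is just the trace of that matrix. Since Lemma \ref{YZ3.6} says that the only nonzero curvature components of $\widetilde{g}$ in the unitary frame $\{e_0,\ldots,e_{n-1}\}$ are $\widetilde{R}_{0\bar 0 0\bar 0}$, $\widetilde{R}_{0\bar 0 i\bar i}$, $\widetilde{R}_{i\bar i i\bar i}$ and $\widetilde{R}_{i\bar i j\bar j}$, the Ricci matrix at the chosen fiber point is diagonal, with diagonal entries $\widetilde{R}_{00} = A+(n-1)B$ and $\widetilde{R}_{ii} = B+\frac{n}{2}C$ for $1\leq i\leq n-1$. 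So the claim becomes the scalar inequality
\[
(A+(n-1)B)+(n-1)\bigl(B+\tfrac{n}{2}C\bigr) \;=\; A + 2(n-1)B+\tfrac{n(n-1)}{2}C \;>\;0.
\]

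Next, I would substitute the two explicit expressions for $A+(n-1)B$ and $B+\frac{n}{2}C$ written just before the proposition. Introducing the shorthand $D := (k+1)(n+kU)+kt_1$, which is strictly positive on $[U_{\min},+\infty)$ by the same computation used to verify condition (i) in the proof of Proposition \ref{YZ3.7inf}, the two fractions have denominators $D^3$ and $D^2$ respectively, and I would combine them over the common denominator $D^3$. A routine expansion yields
\[
\mathrm{trace} \;=\; \frac{2(kt_1)^2 + 2(n-1)(kt_1)\,D + n(n-1)D^2}{D^3}.
\]

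Finally, I would treat the numerator as a quadratic $Q(X) = 2X^2 + 2(n-1)DX + n(n-1)D^2$ in the variable $X := kt_1$. Its leading coefficient is positive, and its discriminant equals
\[
\bigl(2(n-1)D\bigr)^2 - 8n(n-1)D^2 \;=\; 4(n-1)D^2\bigl[(n-1)-2n\bigr] \;=\; -4(n-1)(n+1)D^2,
\]
which is strictly negative for $n\geq 2$. Hence $Q(X)>0$ for every real $X$, in particular for $X = kt_1$, so the trace is positive and $n\in K(n,k,U)$. The main point, and essentially the only non-bookkeeping step, is recognizing that the numerator is a sign-definite quadratic form after clearing denominators; once one writes it in the form above, the negative-discriminant computation settles the matter without any case analysis on the signs of $t_1$, $U$ or $k$.
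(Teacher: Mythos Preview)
Your proof is correct and is precisely the ``explicit calculation, similar to the argument in the proof of Proposition~\ref{kappa2}'' that the paper's own proof invokes without writing out; your variable $D$ is the paper's $V$, and your numerator $2(kt_1)^2+2(n-1)(kt_1)D+n(n-1)D^2$ is exactly the $\kappa=n$ case of the expression $D_0+D_1(\kappa)V+D_2(\kappa)V^2$ appearing there. Your discriminant argument in the variable $X=kt_1$ is a clean way to certify positivity uniformly in all parameters, slightly slicker than the evaluation-at-$V=-kt_1$ style used in Proposition~\ref{kappa2}. The one thing the paper adds that you do not mention is the alternative conceptual route: the sum of all Ricci eigenvalues is the scalar curvature, and Berger's averaging lemma shows that a K\"ahler metric whose holomorphic sectional curvature is nonnegative and not identically zero on any tangent space has positive scalar curvature --- which applies here since $A>0$ everywhere.
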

\begin{proof}
For the metric characterized by \eqref{phiequals}, the eigenalues are $A+(n-1)B$ (of multiplicity 1) and $B+\frac{n}{2}C$ (of multiplicity $n-1$), so the Ricci curvature is $n$-positive if
\begin{equation*}
A+(n-1)B + (n-1)(B+\frac{n}{2}C) > 0.
\end{equation*}
This positivity can be verified by an explicit calculation, similar to the argument in the proof of Proposition \ref{kappa2}. Alternatively, one can observe that the sum of all the eigenvalues of the Ricci curvature is nothing but the scalar curvature and cite a result of Berger \cite[Lemme 7.4]{Berger} which yields that positivity of the holomorphic sectional curvature of a K\"ahler metric implies the positivity of the scalar curvature. 
\end{proof}

\begin{proposition}\label{kappa2}
For any $n,U$, we have $2 \in K(n,1,U)$.
\end{proposition}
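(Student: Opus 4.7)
The plan is to diagonalize the Ricci form and reduce the $2$-positivity condition to two elementary sign checks in one variable. From Lemma \ref{YZ3.6}, the matrix $(\tilde R_{ij})$ in the unitary frame $e_0, \ldots, e_{n-1}$ is diagonal, with eigenvalue $\lambda_1 := A + (n-1)B$ of multiplicity $1$ (the fiber direction $e_0$) and $\lambda_2 := B + \tfrac{n}{2}C$ of multiplicity $n-1$. The only sums of two eigenvalues that occur are $\lambda_1 + \lambda_2$ and, when $n \geq 3$, also $2\lambda_2$; so $2$-positivity amounts to checking the positivity of these two quantities. (For $n = 2$ the single required sum $\lambda_1+\lambda_2$ is the scalar curvature, and one could alternatively invoke the Berger-based argument from Proposition \ref{prop_kappa_n}, but the calculation below handles it uniformly.)

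Specializing to $k = 1$, I would introduce the shorthand $s := 2(n+U) + t_1$ for the recurring denominator $(k+1)(n+kU) + k t_1$. The constraint $U > U_{\min} = -(n+t_1)$ becomes simply $s > -t_1 > 0$, which is the one inequality I will lean on throughout. In these variables,
\[
\lambda_1 = \frac{2t_1^2 + (n-1) t_1 s}{s^3}, \qquad \lambda_2 = \frac{t_1 + n s}{s^2}.
\]
Positivity of $2\lambda_2$ is immediate: $t_1 + ns > t_1 + n(-t_1) = -(n-1)t_1 > 0$ since $t_1 < 0$ and $n \geq 2$.

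The substantive step, and the one I expect to be the only nontrivial obstacle, is proving $\lambda_1 + \lambda_2 > 0$. Combining fractions yields
\[
\lambda_1 + \lambda_2 = \frac{N(s)}{s^3}, \qquad N(s) := n s^2 + n t_1 s + 2 t_1^2,
\]
so everything reduces to the sign of the quadratic $N(s)$ on the half-line $s > -t_1$. Completing the square rewrites this as $N(s) = n\bigl(s + \tfrac{t_1}{2}\bigr)^2 + \tfrac{8-n}{4} t_1^2$, whose second term has an ambiguous sign once $n \geq 9$, so a naive coefficient argument fails. The key observation, which resolves this uniformly, is that $s > -t_1$ forces $s + \tfrac{t_1}{2} > -\tfrac{t_1}{2} > 0$, hence $\bigl(s + \tfrac{t_1}{2}\bigr)^2 > \tfrac{t_1^2}{4}$; substituting,
\[
N(s) > \frac{n t_1^2}{4} + \frac{(8-n) t_1^2}{4} = 2 t_1^2 > 0.
\]
The potentially negative contribution $\tfrac{8-n}{4}t_1^2$ is thus absorbed by the square term, and $\lambda_1 + \lambda_2 > 0$ holds uniformly in $n$, completing the proof.
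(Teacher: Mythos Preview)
Your proof is correct and follows essentially the same route as the paper: your $s$ is the paper's $V=(k+1)(n+kU)+kt_1$, your numerator $N(s)=ns^2+nt_1 s+2t_1^2$ is exactly the paper's $D_{(2)}(V)$, and both arguments reduce to showing this quadratic is positive on $\{s>-t_1\}$. The only cosmetic differences are that the paper handles $n=2$ by invoking Proposition~\ref{prop_kappa_n} and then bounds $D_{(2)}$ by evaluating at the endpoint $V=-t_1$ (where the minimum is attained), whereas you treat all $n$ uniformly and obtain the same lower bound $2t_1^2$ via completing the square.
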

\begin{proof}
When $n=2$, by Proposition \ref{prop_kappa_n}, we have $2 \in K(2,1,U)$.
So we may assume that $n \geq 3$. In order for the metric characterized by \eqref{phiequals} to be $\kappa$-positive for $\kappa < n$, we need the inequalities
$$
B+\frac{n}{2}C>0, \quad A+(n-1)B+(\kappa-1)(B+\frac{n}{2}C)>0
$$
to hold. It actually suffices to have $n \geq 2=k+1$ for the first inequality to hold. The left hand side of the second inequality can be written as
$$
A+(n-1)B+(\kappa-1)(B+\frac{n}{2}C)=\frac{D_0+D_1(\kappa) V+D_2(\kappa) V^2}{V^3},
$$
where $V=(k+1)(n+kU)+k t_1 \geq -t_1>0$ and
$$
D_0=2k^2 t_1^2, \quad D_1(\kappa)=(n+\kappa-2)k t_1, \quad D_2(\kappa)=n(\kappa-1).
$$
In our present case of $k=1, \kappa=2$, the numerator
$$
D_{(\kappa)}(V):=D_0+D_1(\kappa) V+D_2(\kappa) V^2=2 t_1^2+n t_1 V+n V^2,
$$
and $\inf\limits_{V \geq -t_1} D_{(2)}(V)=D_{(2)}(-t_1)=2 t_1^2>0$.
\end{proof}
The following proposition states a well-known general fact about $\kappa$-positive Ricci curvature of a K\"ahler manifold based on a simple combinatorial addition formula. However, we merely state the fact here for the case we are interested in.
\begin{proposition}
If $\kappa \in K(n,k,U)$, then $\kappa+1 \in K(n,k,U)$.
\end{proposition}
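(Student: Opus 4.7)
The plan is to reduce the statement to a purely linear-algebraic fact about Hermitian matrices and then verify it in one short argument. Let $\lambda_1 \leq \lambda_2 \leq \ldots \leq \lambda_n$ be the eigenvalues of the Hermitian matrix $(\widetilde{R}_{ij})$ representing the Ricci curvature at a point corresponding to the parameter value $U$. The definition of $\kappa$-positivity asks that every sum of $\kappa$ eigenvalues be positive, and the infimum over such sums is attained by the $\kappa$ smallest ones. Hence $\kappa \in K(n,k,U)$ is equivalent to the single inequality $\lambda_1 + \lambda_2 + \ldots + \lambda_\kappa > 0$, and similarly $\kappa+1 \in K(n,k,U)$ is equivalent to $\lambda_1 + \lambda_2 + \ldots + \lambda_{\kappa+1} > 0$. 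So the task reduces to showing that the former implies the latter.

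The key observation is that $\lambda_\kappa$, being the maximum of $\lambda_1,\ldots,\lambda_\kappa$, is at least as large as their arithmetic mean, so
\begin{equation*}
\lambda_\kappa \;\geq\; \frac{1}{\kappa}\sum_{i=1}^{\kappa} \lambda_i \;>\; 0
\end{equation*}
under the hypothesis. Since the eigenvalues are ordered non-decreasingly, this forces $\lambda_{\kappa+1} \geq \lambda_\kappa > 0$ as well. Consequently
\begin{equation*}
\sum_{i=1}^{\kappa+1}\lambda_i \;=\; \Bigl(\sum_{i=1}^{\kappa} \lambda_i\Bigr) + \lambda_{\kappa+1} \;>\; 0,
\end{equation*}
which is exactly the desired $(\kappa+1)$-positivity.

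There is no real obstacle here beyond stating the correct reformulation; once one recognizes that $\kappa$-positivity is controlled entirely by the $\kappa$ smallest eigenvalues, the inductive step is automatic and does not use anything particular to the explicit eigenvalues $A + (n-1)B$ and $B + \tfrac{n}{2}C$ of our specific metric. This is the reason the authors describe the statement as a ``well-known general fact'' and only state it in the form needed.
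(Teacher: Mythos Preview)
Your proof is correct. Both your argument and the paper's are elementary and general (neither uses the explicit eigenvalues of the metric), but they proceed differently. You first order the eigenvalues and observe that $\kappa$-positivity is equivalent to positivity of the sum of the $\kappa$ smallest ones; then you note that $\lambda_{\kappa+1}\geq\lambda_\kappa>0$ and add it on. The paper instead avoids ordering altogether and uses the combinatorial averaging identity
\[
\sum_{j=1}^{\kappa+1}\lambda_j=\frac{1}{\kappa}\sum_{m=1}^{\kappa+1}\Bigl(\sum_{\substack{j=1\\ j\neq m}}^{\kappa+1}\lambda_j\Bigr),
\]
in which each inner sum involves only $\kappa$ eigenvalues and is therefore positive by hypothesis. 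Your approach has the advantage of identifying exactly which subset of eigenvalues is extremal, which is useful if one later wants to compute the minimal $\kappa$; the paper's identity is a clean one-liner that applies directly to an arbitrary $(\kappa+1)$-tuple without any preliminary reduction.
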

\begin{proof}
The sum of any $\kappa+1$ eigenvalues $\lambda_1,\ldots\lambda_{\kappa+1}$ satisfies the identity
$$
\sum\limits_{j=1}^{\kappa+1} \lambda_j=\frac{1}{\kappa} \sum\limits_{k=1}^{\kappa+1}(\sum\limits_{j=1\atop j \neq k}^{\kappa+1} \lambda_j).
$$
The expression on the right hand side is clearly positive due to $\kappa \in K(n,k,U)$, as the interior sum is only a sum of $\kappa$ eigenvalues. 
\end{proof}
The converse is not true for arbitrary $\kappa$, and it seems interesting to determine the smallest element of the set $\bigcap\limits_{U \geq -t_1} K(n,k,U)$, which we denote by $\kappa'$. For any $n,k$, the smallest element $\kappa' \neq 1$ because $A+(n-1)B<0$ for sufficiently large $U$. We have already found $\kappa'=2$ in the $k=1$ case. When $k \geq 2$ and $n \leq k$, there exists $U_0 \geq -t_1$ such that $B+\frac{n}{2} C \leq 0$, so in this case $\kappa \in \{1,\ldots,n-1\}$ implies $\kappa \notin \bigcap\limits_{U \geq -t_1} K(n,k,U)$. Under the assumption $n \geq k+1$ (and thus $B+\frac{n}{2} C>0$ for all $U \geq -t_1$), we are able to determine the precise value of $\kappa'$ as in the following proposition.
\begin{proposition}\label{kappaprime}
For $k \geq 2, n \geq k+1$, we have
$$
\kappa'=\lfloor 3n+3-\sqrt{8n(n+1)} \rfloor.
$$
\end{proposition}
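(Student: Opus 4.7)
The plan is to recast the condition $\kappa \in \bigcap_{U \geq -t_1} K(n,k,U)$ as a uniform positivity statement for the quadratic $D_{(\kappa)}(V)$ already introduced in the discussion preceding Proposition \ref{kappa2}, and then to extract $\kappa'$ by a discriminant argument. By the reduction made there, the hypothesis $n \geq k+1$ forces $B+\frac{n}{2}C > 0$ on the whole range, so for $\kappa < n$ the condition $\kappa \in K(n,k,U)$ is equivalent to $D_{(\kappa)}(V) > 0$ at the corresponding $V = (k+1)(n+kU)+kt_1 \geq -t_1$. Hence $\kappa'$ is the smallest integer $\kappa \geq 2$ for which $D_{(\kappa)}(V) > 0$ holds uniformly on $[-t_1, +\infty)$.

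Next I would analyze $D_{(\kappa)}(V) = n(\kappa-1)V^2 + (n+\kappa-2)kt_1 V + 2k^2 t_1^2$ as a quadratic in $V$: for $\kappa \geq 2$ the leading coefficient is positive and, since $t_1 < 0$, the linear coefficient is negative, so the global minimum sits at the positive value $V^* = -(n+\kappa-2)kt_1/[2n(\kappa-1)]$ with minimum value a positive multiple of $8n(\kappa-1)-(n+\kappa-2)^2$. The condition $V^* \geq -t_1$ is equivalent to $\kappa \leq \kappa_0 := (nk+2n-2k)/(2n-k)$; in this interior regime the infimum of $D_{(\kappa)}$ on the admissible interval is the interior value $D_{(\kappa)}(V^*)$, while in the complementary boundary regime the binding inequality becomes $D_{(\kappa)}(-t_1) = t_1^2[(k+1)(2k-n)+\kappa(n-k)] > 0$, i.e., $\kappa > (k+1)(n-2k)/(n-k)$.

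The formula comes from the interior regime. There, positivity of the minimum is the inequality $8n(\kappa-1) > (n+\kappa-2)^2$, equivalent to the quadratic inequality $\kappa^2 - (6n+4)\kappa + (n+2)^2 < 0$ in $\kappa$. Its discriminant works out to $(6n+4)^2 - 4(n+2)^2 = 32n(n+1)$ and its roots are $(3n+2) \pm \sqrt{8n(n+1)}$. Since the larger root exceeds $n$, the only effective constraint is $\kappa > (3n+2) - \sqrt{8n(n+1)}$, and the smallest integer satisfying this is $\lfloor (3n+3) - \sqrt{8n(n+1)} \rfloor$ regardless of whether $\sqrt{8n(n+1)}$ happens to be rational, matching the claimed formula.

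The main obstacle will be verifying that this candidate really is $\kappa'$ rather than merely one admissible value: one must check, under $k \geq 2$ and $n \geq k+1$, that the integer delivered by the discriminant calculation lies in the interior regime $\kappa \leq \kappa_0$ so that the argument above applies, and that the complementary boundary threshold $(k+1)(n-2k)/(n-k)$ does not admit a strictly smaller admissible integer. I expect this to reduce to an elementary but delicate polynomial comparison between the two thresholds and $\kappa_0$, most cleanly handled by clearing the square root and checking the resulting polynomial inequality directly under the stated hypotheses.
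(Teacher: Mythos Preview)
Your plan mirrors the paper's: reduce $\kappa$-positivity to $D_{(\kappa)}(V)>0$ on $V\ge -t_1$, read off the discriminant condition $(n+\kappa-2)^2<8n(\kappa-1)$, and solve for $\kappa$. The paper does exactly this, simply identifying $\inf_{V\ge -t_1}D_{(\kappa)}$ with the value at the vertex and never addressing the interior/boundary dichotomy you raise; in that respect your treatment is more careful than what is written there.

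The gap is precisely the verification you defer in your last paragraph, and it is not routine---in fact it fails in general. For fixed $k\ge 2$ your threshold $\kappa_0=(nk+2n-2k)/(2n-k)$ tends to $(k+2)/2$ as $n\to\infty$ and hence stays bounded, whereas the formula's candidate $\kappa'\sim(3-2\sqrt 2)\,n$ grows without bound. So for $n$ large relative to $k$ the putative $\kappa'$ lands in the boundary regime $\kappa>\kappa_0$, and the vertex value no longer controls the infimum. In that regime your own boundary criterion $\kappa>(k+1)(n-2k)/(n-k)$ has right-hand side bounded above by $k+1$, so much smaller $\kappa$ already succeed. Concretely, take $k=2$, $n=20$: then $\kappa_0=2$, the vertex of $D_{(3)}$ sits at $\tfrac{21}{40}|t_1|<|t_1|$, and $D_{(3)}(-t_1)=6t_1^2>0$, while $D_{(2)}(-t_1)=-12t_1^2<0$; hence $\kappa'=3$, whereas the stated formula gives $\lfloor 63-\sqrt{3360}\rfloor=5$. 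Thus the ``elementary but delicate polynomial comparison'' you anticipate cannot be carried through across the full range $k\ge 2$, $n\ge k+1$, and the interior-regime discriminant argument alone does not establish the proposition as stated.
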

\begin{proof}
Since $\kappa' \neq 1$, we start by letting $\kappa'=2$. We need to find $n,k \in \ZZ^+$ with $n \geq k+1 \geq 3$ such that
\begin{align*}
2 \in \bigcap\limits_{U \geq -t_1} K(n,k,U)&\iff D_{(2)}(V)>0, \forall V \geq -t_1\\
&\iff D_{(2)}(-kt_1)=(2-\frac{n}{4})k^2 t_1^2>0\\
&\iff  n<8.
\end{align*}

Assume $3 \leq \kappa' \leq n$. Then we are seeking $n,k \in \ZZ^+$ with $n \geq k+1 \geq 3$ such that
\begin{align*}
\kappa' \in \bigcap\limits_{U \geq -t_1} K(n,k,U) &\iff \inf\limits_{V \geq -t_1} D_{(\kappa')}(V)>0\\
&\iff (n+\kappa'-2)^2<8n (\kappa'-1),\\
\kappa'-1 \notin \bigcap\limits_{U \geq -t_1} K(n,k,U) &\iff \inf\limits_{V \geq -t_1} D_{(\kappa'-1)}(V) \leq 0\\
&\iff (n+\kappa'-3)^2 \geq 8n(\kappa'-2).
\end{align*}
For given $\kappa' \geq 3$, we can solve the inequalities for $n$:
$$
3 \kappa'-5+\sqrt{8 (\kappa'-1)(\kappa'-2)} \leq n < 3 \kappa'-2+\sqrt{8 \kappa'(\kappa'-1)}.
$$
And we can solve for $\kappa'$, the solution is
\begin{align*}
&3n+2-\sqrt{8n(n+1)} < \kappa' \leq 3n+3-\sqrt{8n(n+1)}\\
\iff &\kappa'=\lfloor 3n+3-\sqrt{8n(n+1)} \rfloor.
\end{align*}
This is also true for the $\kappa'=2$ case. Let $3 \leq n <8$, then
$$
\lfloor 3n+3-\sqrt{8n(n+1)} \rfloor=2.
$$
\end{proof}

\begin{remark}
Interestingly enough, one can apply L'H\^{o}pital's rule to find that $\kappa'=(3-2 \sqrt{2})n+o(n)$ in Proposition \ref{kappaprime}. Note that $3-2 \sqrt{2} \approx 0.17$.
\end{remark}
Finally, we summarize our findings in the form of the following remark.
\begin{remark}
We write
$$
K(n,k) =\bigcap\limits_{U \geq -t_1} K(n,k,U).
$$
Then $\max K(n,k)=n$ and
$$
\kappa'=\min K(n,k)=
\begin{cases}
2, \quad k=1\\
\lfloor 3n+3-\sqrt{8n(n+1)} \rfloor, \quad k \geq 2, n \geq k+1\\
n, \quad k \geq 2, 2 \leq n \leq k.
\end{cases}
$$
\end{remark}

\begin{acknowledgement}
This article is based on the first author's Ph.D. dissertation, written under the direction of the second author at the University of Houston.
\end{acknowledgement}

\end{document}